\documentclass[12pt,a4paper]{article}
\usepackage[utf8]{inputenc}
\usepackage[T2A]{fontenc}
\usepackage{amscd,amssymb}
\usepackage{amsfonts,amsmath,array}
\usepackage[dvips]{graphicx}
\usepackage{longtable,wrapfig}
\usepackage{amsthm}
\usepackage{xcolor}
\newtheorem{mydef}{Definition}

\newtheorem{theorem}{Theorem}

\newtheorem{corollary}{Corollary}
\newtheorem{conj}{Conjecture}

\newtheorem{lemma}{Lemma}
\newcommand{\m}{\mathcal}

\title{Families without $s$-matchings: the other end}
\author{Andrey Kupavskii\footnote{Moscow Institute of Physics and Technology, St. Petersburg State University, Innopolis University, Russia; Email: {\tt kupavskii@ya.ru}}, Georgy Sokolov\footnote{Moscow Institute of Physics and Technology, Innopolis University, Russia; Email: {\tt sokolov.gm@phystech.edu}}}
\begin{document}
	\maketitle
\begin{abstract}
    In this paper, we determine the largest family $\mathcal F \subset 2^{[n]}$ without $s$ pairwise disjoint sets, provided  $n=ms+c$ for positive integers $m,c$, and $s \geq s_0(m, c)$. This result can be seen as a non-uniform analogue of the results on the Erd\H os Matching Conjecture in the regime when the clique is extremal. 
\end{abstract}
\section{Introduction}

We begin with some notation. For integers $a\le b$, let
$[a,b]:=\{a,a+1,\ldots,b\}$ and $[b]:=[1,b].$
For a set $X$ and an integer $k$, we write $2^X$ for the power set of $X$, ${X\choose k}$ for the family of all $k$-element subsets of $X$, and ${X\choose \le k}$ for the family of all subsets of $X$ of size at most $k$.

A {\it matching} is a collection of pairwise disjoint sets. For a family $\mathcal F$, let $\nu(\mathcal F)$ denote the maximum size of a matching contained in $\mathcal F$. The main object of this paper is the quantity
\[
e(n,s):=\max\bigl\{|\mathcal F|:\mathcal F\subset 2^{[n]},\ \nu(\mathcal F)<s\bigr\}.
\]
This parameter was introduced by Erd\H{o}s and Kleitman in the 1960s (see \cite{Kl}). It was studied further in a series of papers by Frankl and the first author~\cite{FK9,FK6,FK8}, as well as in our earlier paper \cite{KS}. For a detailed and up-to-date account of the history of the problem and its connection to the Erd\H{o}s Matching Conjecture \cite{E}, we refer the reader to \cite{KS}.

A basic feature of the problem is that the behaviour of $e(n,s)$ depends strongly on the residue class of $n$ modulo $s$. Accordingly, we write
\[
n=ms+c, \qquad 1\le c\le s,
\]
and throughout the paper set
\[
\ell:=s-c.
\]

We now introduce four candidate extremal families avoiding $s$-matchings:
\begin{align*}
\mathcal P(m,s,\ell) &:= \{P\in 2^{[n]}: |P|+|P\cap [\ell-1]|\ge m+1\},\\
\mathcal P'(m,s,\ell) &:= \bigcup_{i=m+1}^n {[n]\choose i}\ \cup\ {[m\ell-1]\choose m},\\
\mathcal Q(m,s,\ell) &:= \{P\in 2^{[n]}: |P|+|P\cap [ms-c-1]|\ge 2m\},\\
\mathcal W(m,s,\ell) &:= \{P\in 2^{[n]}: |P\cap [ms-1]|\ge m\}.
\end{align*}

The families $\mathcal P(m,s,\ell)$ and $\mathcal P'(m,s,\ell)$ contain every set of size at least $m+1$, and their $m$-uniform layers are precisely the two standard candidate extremal families in the Erd\H{o}s Matching Conjecture for $m$-uniform families avoiding an $\ell$-matching.

The third family $\m Q(m,s,\ell)$ can be thought of as another  `clique' counterpart of the family $\m P(m,s,\ell)$, the first being $\m P'(m,s,\ell)$. It has a larger clique on the $m$'th layer in exchange for fewer sets on the upper layers.

The family $\mathcal W(m,s,\ell)$ is obtained by applying the {\it doubling} operation $c+1$ times to the extremal family for $n=ms-1$. Doubling turns a family $\mathcal F\subset 2^{[n]}$ into the family $\mathcal G\subset 2^{[n+1]}$ defined by
$
\mathcal G:=\{A\subset [n+1]: A\cap [n]\in \mathcal F\}.
$
This operation preserves the matching number. Kleitman proved in \cite{Kl} that $e(ms-1,s), e(ms,s)$ is attained on $\m W(m,s,\ell)$. (In the former case, it is simply the family of all sets of size at least $m$.)

In the next section we discuss the sizes of these four families and show that each of them indeed avoids an $s$-matching.

For every pair $(n,s)$ for which the value of $e(n,s)$ is currently known, the extremal family is one of the four constructions above. In particular, in \cite{KS} we  showed that for $n\le 3s$ each of these four families occur as extremal examples. Roughly speaking, when $c$ is close to $s$, the extremal construction is of type $\mathcal P(m,s,\ell)$, whereas for small $c$ the known extremal examples are of the type $\mathcal Q(m,s,\ell)$ or $\mathcal W(m,s,\ell)$. The family $\m P'(m,s,\ell)$ appears as the extremum somewhere in the middle.

This leads to the following general conjecture.

\begin{conj}
For all $n,s$,
\[
e(n,s)=\max\bigl\{|\mathcal P(m,s,\ell)|,\ |\mathcal P'(m,s,\ell)|,\ |\mathcal Q(m,s,\ell)|,\ |\mathcal W(m,s,\ell)|\bigr\}.
\]
\end{conj}

In \cite{FK9}, the authors confirmed it for $s\ge m\ell+3\ell+3$, showing that $\m P(m,s,\ell)$ is extremal. The main result of this paper confirms the conjecture `on the other end': for fixed $m,c$ and all sufficiently large $s$.

\begin{theorem} \label{t.main}
For every $m\ge 1$ and every $c\ge 1$ there exists $s_0$ such that for all $s\ge s_0$,
\[
e(n,s)=|\mathcal Q(m,s,\ell)|.
\]
Moreover,
\begin{itemize}
    \item if $c\ge 2$, then $\mathcal Q(m,s,\ell)$ is the unique shifted family attaining the maximum;
    \item if $c=1$, then $\mathcal Q(m,s,\ell)$ and $\mathcal W(m,s,\ell)$ are the only shifted families attaining the maximum.
\end{itemize}
\end{theorem}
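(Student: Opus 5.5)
We will prove that $|\mathcal F|\le|\mathcal Q(m,s,\ell)|$ for every shifted $\mathcal F\subset 2^{[n]}$ with $\nu(\mathcal F)<s$, using the standard reduction to shifted families (shifting preserves $\nu(\mathcal F)<s$ and the size). We may also assume $\mathcal F$ is an up-set. Since $n=ms+c$ with $m,c$ fixed and $s\to\infty$, the relevant sets have size about $m$. In the extremal families, all sets of size $\ge 2m$ are present and sets of size $<m$ are absent. Moreover, $|\mathcal Q|-|\mathcal W|$ and the other comparisons are governed by the layers $m,\dots,2m-1$.

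\textbf{Step 1: structure of the low layers.} First we show that if $\mathcal F$ contains a set $F$ with $|F|\le m-1$, then the remaining elements $[n]\setminus F$ must contain no $(s-1)$-matching from $\mathcal F$. This forces heavy losses, of order $\binom{n}{m}$ times a constant, which for large $s$ exceeds any possible gain. Concretely, we compare with the family of all sets of size $\ge m+1$ and use the Kleitman-type bound $e(n',s')$ on the link. Hence we may assume $\mathcal F\subset\binom{[n]}{\ge m}$.

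\textbf{Step 2: the key matching structure.} Write $\mathcal F_k=\mathcal F\cap\binom{[n]}{k}$. A matching of $s$ sets covering at most $n=ms+c$ elements must consist mostly of $m$-sets, with at most $c$ extra elements in total. So the constraint says: for any $a$ sets in $\mathcal F$ of sizes $m+t_1,\dots,m+t_a$ with $\sum t_i\le c$ (this includes $(m+1)$-sets, and up to $c$ of them), plus $s-a$ sets from $\mathcal F_m$, these cannot be pairwise disjoint. Shiftedness lets us assume the $m$-sets are pushed into an initial segment. The competing quantities are then $|\mathcal F_m|$ (an EMC-type quantity, with the clique $\binom{[ms-c-1]}{m}$ in $\mathcal Q$) versus the missing sets in layers $m+1,\dots,2m-1$.

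We use the Erdős Matching Conjecture in the clique regime, which is known for $n<(m+1)(s'+\text{const})$ by Frankl's results; here the ambient ground set relative to $s$ is exactly in that regime. Fix the largest $j$ such that $\mathcal F_m$ contains an $s-j$ matching. Equivalently, we parametrize by how much of a cover $\mathcal F_m$ leaves.

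\textbf{Step 3: the optimization.} For each $j$ we bound $|\mathcal F_m|$ by a clique-type bound of size $\approx\binom{ms-c+?}{m}$. We then show that the higher layers must avoid sets meeting the complement in a prescribed way. This yields a weighted inequality whose maximum over $j$ is attained at the $\mathcal Q$ configuration, or at the $\mathcal W$ configuration when $c=1$. The case $c=1$ is a tie, which we check directly; uniqueness among shifted families then follows by tracking equality in each bound. Step 3 is the main obstacle. The difference between candidates is only of order $s^{m-1}$ relative to $\binom{n}{m}$, so the EMC stability we invoke must come with error terms of lower order, namely Frankl–Kupavskii style stability for near-clique families. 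Handling the mixed matchings, with several $(m+1)$-sets for $c\ge2$, requires an inductive peeling argument: remove a set of size $m+t$ and reduce to $(n-m-t,s-1)$ with a smaller $c$, using induction on $c$. We expect this bookkeeping to be the delicate part.
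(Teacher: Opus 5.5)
Your Step~1 rests on a false estimate. Containing an $(m-1)$-set $F$ does not cost order ${n\choose m}$ missing sets. To see this, give weight $\frac1{m-1}$ to the points of $F$, weight $\frac1m$ to $m(s-1)-1$ further points, and weight $0$ to the remaining $c+2$ points. By Lemma~\ref{l.weak_duality}, the family of all sets of weight at least $1$ has no $s$-matching, contains $F$, and misses only $O(s^{m-1})$ sets. So an $(m-1)$-set costs the same order as $|\overline{\mathcal Q(m,s,\ell)}|=(2c+2){n\choose m-1}+O(s^{m-2})$, and what must be proved is a comparison of leading constants. Concretely, the link $\mathcal F\cap 2^{[n]\setminus F}$ lives on $(s-1)m+(c+1)$ points, has no $(s-1)$-matching, and must be shown to miss more than $(2c+2){n\choose m-1}$ sets. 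Your proposed tools do not give this:
\begin{itemize}
\item comparing with ${[n]\choose \ge m+1}$, whose complement has order ${n\choose m}$, says nothing about beating $\mathcal Q$;
\item Kleitman's theorem is not available at residue $c+1$;
\item restricting the link to $m(s-1)$ of its points and applying Kleitman there yields only about $2{n\choose m-1}$.
\end{itemize}
What is actually needed is the asymptotic form of the theorem itself at parameters $(s-1,c+1)$, which gives $(2c+4){n\choose m-1}$. This is why the paper runs the argument in the opposite order. It first proves $|\overline{\mathcal F}|\ge(2c+2){n\choose m-1}+O(s^{m-2})$ for every shifted family, from an analysis of layers $m$ and $m+1$ that does not assume $\mathcal F\subset{[n]\choose\ge m}$. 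Only then does it apply this bound to the link. Your Step~1 therefore cannot precede the main analysis.

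Your Step~3, which carries the whole proof, is left unspecified, and the parameter you propose is too coarse for it. Suppose you parametrize by the matching number of $\mathcal F^{(m)}$. If $\nu(\mathcal F^{(m)})\le s-1$, the clique bound gives only $y_{\mathcal F}(m)\ge (c+1){n\choose m-1}+O(s^{m-2})$. But $y_{\mathcal Q}(m)=(2c+1){n\choose m-1}+O(s^{m-2})$, and for $c\le m-1$ both $\mathcal Q^{(m)}$ and $\mathcal W^{(m)}$ have matching number exactly $s-1$. You would therefore need a sharp EMC stability theorem, which you never formulate. The paper avoids this with three ideas:
\begin{itemize}
\item[(i)] \emph{The deletion number} $d(\mathcal F)$: the least number of points whose removal kills all $\ell$-matchings in $\mathcal F^{(m)}$, where $\ell=s-c$. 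Combined with Theorem~\ref{t.frankl_EMC}, it gives $y_{\mathcal F}(m)\ge((m+1)c-d(\mathcal F)+1){n\choose m-1}+O(s^{m-2})$. This is tight at $\mathcal Q$, where $d=(m-1)c$.
\item[(ii)] \emph{The shifting Lemma~\ref{l.deleteon_with_shifting}}: it turns $d(\mathcal F)>d$ into $(\ell+k)$-matchings inside $[d+m\ell]\setminus X$ for every $X$ of size $d-km$, and this forces $d(\mathcal F)\le mc$.
\item[(iii)] \emph{A random $s$-matching}: these matchings are combined with random $(m+1)$-sets, each made of $m-1$ deleted points and $2$ of the last roughly $2c$ points. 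The expected number of non-members of $\mathcal F$ is at least $1$, which gives $y_{\mathcal F}(m+1)\ge(2c-1){n\choose m-1}+O(s^{m-2})$ when $d(\mathcal F)>(m-1)c$, and $y_{\mathcal F}(m+1)\ge{n\choose m-1}+O(s^{m-2})$ when $d(\mathcal F)=(m-1)c$ and $\mathcal F\nsubseteq\mathcal Q$.
\end{itemize}
Your peeling induction goes the wrong way. The link of an $(m+t)$-set has parameters $(s-1,c-t)$, so induction only supplies the weaker constant $2(c-t)+2$. Finally, uniqueness cannot come from ``tracking equality'', because every bound carries an $O(s^{m-2})$ error. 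It comes instead from a stability gap. A shifted $\mathcal F\nsubseteq\mathcal Q$ (and also $\nsubseteq\mathcal W$ when $c=1$) misses at least $(2c+3){n\choose m-1}+O(s^{m-2})$ sets. Hence a shifted extremal family is contained in $\mathcal Q$ or $\mathcal W$, and so equals one of them.
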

This result can be seen as a non-uniform analogue of the results on the Erd\H os Matching Conjecture in the regime when the clique is extremal, see \cite{Frankl_perfect_EMC, KoKu}. 

We recall the notion of shiftedness at the beginning of Section~\ref{sec3}. Throughout the paper, asymptotic notation (such as $O(), \Theta()$) is used with $m$ and $c$ fixed and $s,n\to\infty$. Since the value of $e(ms+c,s)$ is already known for $m=1$ by \cite{FK9}, from this point on we assume $m\ge 2$.

Finally, we note that when $c=1$, the families $\mathcal Q(m,s,\ell)$ and $\mathcal W(m,s,\ell)$ coincide. This will be shown in the next section, where we also discuss further properties of the extremal families.

\section{Extremal families}
The families that we consider contain almost all subsets of $[n]$. Thus, rather than computing the sizes of the families themselves, it is more convenient to compute the sizes of their complements. For a family $\mathcal{F} \subset 2^{[n]}$ we denote $\overline{\mathcal{F}}:=2^{[n]} \setminus \mathcal{F}.$ 
Simple computations show that
\begin{align*}
|\overline{\mathcal{P}(m, s, \ell)}| &= \Theta(s^m),\\
|\overline{\mathcal{P}'(m, s, \ell)}| &= ((m+1)c+2){n \choose m - 1} + O(s^{m-2}),\\
|\overline{\mathcal{Q}(m, s, \ell)}| &= (2c+2){n \choose m - 1} + O(s^{m-2}),\\
|\overline{\mathcal{W}(m, s, \ell)}| &= 2^{c+1}{n \choose m - 1} + O(s^{m-2}).
\end{align*}

The following simple observation is useful for proving upper bounds on the matching number.

\begin{lemma} \label{l.weak_duality}
    Let $x_1, \ldots, x_n\in \mathbb R$ satisfy $x_i \geq 0$ and $\sum_{i=1}^{n}x_i < s$. Let $\mathcal{F}$ be a family such that for every $F \in \mathcal{F}$ we have $\sum_{i\in F}x_i \geq 1$. Then $\nu(\mathcal{F}) < s$.
\end{lemma}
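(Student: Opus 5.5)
This is fractional weak duality, and I will argue by contradiction. Suppose that $\nu(\mathcal F)\ge s$. Then there are pairwise disjoint sets $F_1,\ldots,F_s\in\mathcal F$. The plan is to sum the hypothesis $\sum_{i\in F}x_i\ge 1$ over these $s$ sets and compare the result with the total weight $\sum_{i=1}^n x_i$.

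First, summing the hypothesis over $j=1,\ldots,s$ gives
\[
\sum_{j=1}^{s}\sum_{i\in F_j}x_i\ \ge\ s.
\]
Next, since the $F_j$ are pairwise disjoint, each index $i\in[n]$ occurs in at most one of the inner sums. The left-hand side is therefore the sum of $x_i$ over $i\in F_1\cup\ldots\cup F_s$. Because every $x_i\ge 0$, this sum is at most $\sum_{i=1}^n x_i$, which is $<s$ by assumption. Combining the two inequalities gives $s\le\sum_{i=1}^n x_i<s$, a contradiction, so $\nu(\mathcal F)<s$.

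There is no real obstacle here. The only point that needs care is that nonnegativity of the $x_i$ is used to drop the indices outside $F_1\cup\ldots\cup F_s$; disjointness alone would not suffice. In the rest of the paper the lemma will be applied by choosing explicit weights, for instance $x_i=\frac1{2m}$ for $i\le ms-c-1$ and $x_i=\frac{1}{2m}$-type weights adapted to the remaining elements. One then checks that each member of $\mathcal Q(m,s,\ell)$, and likewise of the other three families, has weight at least $1$ while the total weight is below $s$.
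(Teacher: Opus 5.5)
Your proof is correct and is essentially the paper's argument: take an $s$-matching, sum the weight condition over its members, and use disjointness together with $x_i\ge 0$ to bound the result by $\sum_{i=1}^n x_i<s$. One slip in your closing side remark, which is not part of the proof: for $\mathcal Q(m,s,\ell)$ the paper uses $x_i=\frac1m$ (not $\frac1{2m}$) for $i\le ms-c-1$ and $x_i=\frac1{2m}$ for $i\ge ms-c$. With $\frac1{2m}$ on all of $[ms-c-1]$, an $m$-set inside $[ms-c-1]$ would have weight only $\frac12$.
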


\begin{proof}
    We argue indirectly. Let $F_1, \ldots, F_s$ be an $s$-matching in $\mathcal{F}$. Then
    \[
    \sum_{i=1}^{n}x_i \geq \sum_{i\in \bigcup_{j = 1}^s F_j}x_i = \sum_{j=1}^{s}\sum_{i\in F_j}x_i \geq s.
    \]
    This contradicts the assumption $\sum_{i=1}^{n}x_i < s$.
\end{proof}

\begin{corollary}
    The families $\mathcal{P}(m, s, \ell)$, $\mathcal{P'}(m, s, \ell)$, $\mathcal{Q}(m, s, \ell)$, and $\mathcal{W}(m, s, \ell)$ do not contain an $s$-matching.
\end{corollary}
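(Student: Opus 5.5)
The plan is to apply Lemma~\ref{l.weak_duality} to each of the four families, choosing nonnegative weights $x_1,\dots,x_n$ with total weight $<s$ such that every member of the family has weight at least $1$. Throughout, $n=ms+c$ and $\ell=s-c$.

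\emph{Family $\mathcal P(m,s,\ell)$.} Put $x_i=\frac{2}{m+1}$ for $i\in[\ell-1]$ and $x_i=\frac1{m+1}$ otherwise. A set $P$ then has weight $\frac{|P|+|P\cap[\ell-1]|}{m+1}\ge 1$. The total weight is
\[
\frac{n+\ell-1}{m+1}=\frac{ms+c+s-c-1}{m+1}=\frac{(m+1)s-1}{m+1}<s .
\]

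\emph{Family $\mathcal Q(m,s,\ell)$.} Put $x_i=\frac1m$ for $i\in[ms-c-1]$ and $x_i=\frac1{2m}$ otherwise. A set $P\in\mathcal Q$ has weight $\frac{|P|+|P\cap[ms-c-1]|}{2m}\ge1$. The total weight is
\[
\frac{n+ms-c-1}{2m}=\frac{2ms-1}{2m}<s .
\]

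\emph{Family $\mathcal W(m,s,\ell)$.} Put $x_i=\frac1m$ on $[ms-1]$ and $x_i=0$ elsewhere. Every member of $\mathcal W$ has weight at least $1$, and the total weight is $\frac{ms-1}{m}<s$.

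\emph{Family $\mathcal P'(m,s,\ell)$.} Here a single weighting does not obviously work, so I would argue combinatorially instead. Suppose $F_1,\dots,F_s$ is an $s$-matching in $\mathcal P'$, and let $t$ be the number of sets of size exactly $m$, all of which lie in $[m\ell-1]$. The other $s-t$ sets have size at least $m+1$, so
\[
n\ge mt+(m+1)(s-t)=(m+1)s-t .
\]
Since $n=ms+c$, this gives $t\ge s-c=\ell$. But $\ell$ pairwise disjoint $m$-subsets would need $m\ell$ elements, while $[m\ell-1]$ has only $m\ell-1$. This is a contradiction.

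The only step needing any thought is $\mathcal P'$, where the counting argument replaces the weighting. The three weighted cases are routine checks of the two inequalities.
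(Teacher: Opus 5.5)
Your proof is correct. For $\mathcal P$, $\mathcal Q$ and $\mathcal W$ it coincides with the paper's: same weights, same checks.

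The only divergence is $\mathcal P'$, and there your remark that a single weighting does not obviously work is mistaken. The paper puts $x_i=\frac1m$ for $i\le m\ell-1$ and $x_i=\frac1{m+1}$ for $i\ge m\ell$. Every set of size at least $m+1$ and every $m$-subset of $[m\ell-1]$ then has weight at least $1$, and the total weight is $\frac{m\ell-1}{m}+\frac{(m+1)c+1}{m+1}=s-\frac1{m(m+1)}<s$.

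Your counting argument is in fact this same certificate split in two. Take $\frac1{m+1}$ times your size count $(m+1)s-t\le n$ and add $\frac1{m(m+1)}$ times the clique count $mt\le m\ell-1$. The left side becomes $s$ and the right side becomes $\sum_i x_i$ for the paper's weights, since $\frac1{m+1}+\frac1{m(m+1)}=\frac1m$. So the two routes are equivalent. The paper's packaging handles all four families uniformly through Lemma~\ref{l.weak_duality}. Yours is self-contained and shows why $m\ell-1$ is the right clique size: any $s$-matching is forced to use at least $\ell$ sets of size $m$.

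One caveat is shared by both arguments. They use $|[\ell-1]|=\ell-1$ and $|[m\ell-1]|=m\ell-1$, that is, $\ell\ge1$. For $c=s$ the families $\mathcal P$ and $\mathcal P'$ are just all sets of size at least $m+1$ on $(m+1)s$ points, and they do contain an $s$-matching. This does not matter in the paper's regime $s\ge s_0(m,c)$.
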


\begin{proof}
    We apply Lemma~\ref{l.weak_duality}. Specifically, we choose the weights $x_i$ as follows:
    \begin{itemize}
        \item[] $\mathcal{P}(m, s, \ell)$: put $x_i = \frac{2}{m+1}$ for $i \leq \ell - 1$ and $x_i = \frac{1}{m+1}$ for $i \geq \ell$;
        \item[] $\mathcal{P}'(m, s, \ell)$: put $x_i = \frac{1}{m}$ for $i \leq m\ell-1$ and $x_i = \frac{1}{m+1}$ for $i \geq m\ell$;
        \item[] $\mathcal{Q}(m, s, \ell)$: put $x_i = \frac{1}{m}$ for $i \leq ms - c - 1$ and $x_i = \frac{1}{2m}$ for $i \geq ms-c$;
        \item[] $\mathcal{W}(m, s, \ell)$: put $x_i = \frac{1}{m}$ for $i \leq ms - 1$ and $x_i = 0$ for $i \geq ms$.
    \end{itemize}
\end{proof}
    \begin{lemma} \label{l.c=1 equality}
     We have   $|\mathcal{Q}(m, s, s-1)| = |\mathcal{W}(m, s, s-1)|$.
    \end{lemma}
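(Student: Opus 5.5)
The plan is to compare the complements $\overline{\mathcal Q(m,s,s-1)}$ and $\overline{\mathcal W(m,s,s-1)}$ directly, by counting each as a sum of binomial coefficients and then matching the two sums with Pascal's rule. Here $c=1$, so $n=ms+1$. Set $N:=ms-2$, so that $[ms-c-1]=[N]$ and $[ms-1]=[N+1]$. The whole argument is this routine count; the only place care is needed is the boundary case in the $\mathcal Q$ count, where we must check that $b\le 3$ never violates the inequality when $a\le m-2$.

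\textbf{Counting $\overline{\mathcal Q}$.} For $P\subset[n]$ write $a:=|P\cap[N]|$ and $b:=|P\cap\{N+1,N+2,N+3\}|\in\{0,1,2,3\}$. Then $P\notin\mathcal Q$ if and only if $2a+b<2m$. We split by the value of $a$.
\begin{itemize}
\item If $a\le m-2$, then $2m-2a\ge 4>b$, so every one of the $8$ choices of $P\cap\{N+1,N+2,N+3\}$ is allowed.
\item If $a=m-1$, the condition becomes $b<2$, which allows $1+3=4$ choices.
\item If $a\ge m$, the condition cannot hold, so there are no choices.
\end{itemize}
Hence
\[
|\overline{\mathcal Q(m,s,s-1)}| = 8\sum_{a\le m-2}{N\choose a}+4{N\choose m-1}.
\]

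\textbf{Counting $\overline{\mathcal W}$.} Here $P\notin\mathcal W$ if and only if $|P\cap[N+1]|\le m-1$, and the two elements $ms,\,ms+1$ are unrestricted. Hence
\[
|\overline{\mathcal W(m,s,s-1)}| = 4\sum_{a\le m-1}{N+1\choose a}.
\]

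\textbf{Matching the sums.} Apply Pascal's rule ${N+1\choose a}={N\choose a}+{N\choose a-1}$ to each term. This gives
\[
\sum_{a\le m-1}{N+1\choose a}=\sum_{a\le m-1}{N\choose a}+\sum_{a\le m-2}{N\choose a}=2\sum_{a\le m-2}{N\choose a}+{N\choose m-1}.
\]
Multiplying by $4$ reproduces the expression for $|\overline{\mathcal Q(m,s,s-1)}|$ exactly. The complements therefore have equal size, and so do the families themselves.

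Alternatively, one could build an explicit bijection between the two complements by conditioning on whether the element $ms-1$ lies in $P$, which mirrors the Pascal step. The computation above is shorter, so that is the one I would write up.
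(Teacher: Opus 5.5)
Your proof is correct. The count $|\overline{\mathcal W(m,s,s-1)}|=4\sum_{i\le m-1}{ms-1\choose i}$ is the same as the paper's, but you count $\overline{\mathcal Q(m,s,s-1)}$ with a genuinely different decomposition.

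The paper splits $\overline{\mathcal Q}$ by total size $|F|$: all sets of size less than $m$, the $m$-sets meeting the tail $\{ms-1,ms,ms+1\}$, and the $(m+1)$-sets containing it. This gives $\sum_{i\le m}{sm+1\choose i}-{sm-2\choose m}+{sm-2\choose m-2}$. Matching that with the $\mathcal W$ count then needs the expansion ${sm+1\choose i}={sm-1\choose i}+2{sm-1\choose i-1}+{sm-1\choose i-2}$ and a small telescoping cancellation.

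You split instead by $a=|P\cap[ms-2]|$. The count then factors as (subsets of $[N]$) times (admissible tail patterns), and a single application of Pascal's rule finishes the argument. Your decomposition also shows why the equality holds. Both complements contain every set with $a\le m-2$ and no set with $a\ge m$. At $a=m-1$, each admits exactly $4$ of the $8$ patterns on the tail: those with $b\le 1$ for $\mathcal Q$, and those avoiding $ms-1$ for $\mathcal W$. The two complements differ only in swapping the pattern $\{ms-1\}$ for $\{ms,ms+1\}$ at that level, which makes the explicit bijection you mention immediate.

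The paper's size-based split stays closer to the layer-by-layer viewpoint ($y(m)$, $y(m+1)$) used in the rest of the paper. Your route is shorter and more transparent.
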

\begin{proof}
We will calculate $|\overline{\mathcal{Q}(m, s, s-1)}|$ and $|\overline{\mathcal{W}(m, s, s-1)}|$.
Since $\mathcal{W}(m, s, s-1)$ is obtained from ${[ms - 1] \choose \geq m}$ by iterating the doubling operation twice, we have
\begin{equation} \label{eq: wsz_c=1}
    |\overline{\mathcal{W}(m, s, s-1)}| = 4\sum_{i=0}^{m-1}{ms-1 \choose i}.
\end{equation}

To calculate $|\overline{\mathcal{Q}(m, s, s-1)}|$, note that
\[
\overline{\mathcal{Q}(m, s, s-1)} = \{F \in 2^{[sm+1]}: 2|F| - |F \cap \{ms-1, ms, ms+1\}| < 2m \}.
\]
Thus, $\overline{\mathcal{Q}(m, s, s-1)}$ consists of
\begin{itemize}
    \item all sets of size less than $m$. There are $\sum_{i=0}^{m - 1}{sm+1 \choose i}$ such sets;
    \item sets of size $m$ that intersect $\{sm - 1, sm, sm+1\}$. There are ${sm + 1 \choose m} - {sm - 2 \choose m}$ such sets;
    \item sets of size $m + 1$ that contain $\{sm - 1, sm, sm + 1\}$. There are ${sm - 2 \choose m - 2}$ such sets.
\end{itemize}

Therefore,
\begin{equation} \label{eq: qsz_c=1}
    |\overline{\mathcal{Q}(m, s, s-1)}|
    = \sum_{i=0}^{m}{sm+1 \choose i} - {sm - 2 \choose m} + {sm - 2 \choose m - 2}.
\end{equation}

To prove that \eqref{eq: qsz_c=1} is equal to \eqref{eq: wsz_c=1}, we express ${sm+1 \choose i}$ in terms of ${sm-1 \choose i}$:
\[
{sm+1 \choose i} = {sm-1 \choose i} + 2 {sm-1 \choose i - 1} + {sm-1 \choose i - 2},
\]
where, as usual, we set ${n \choose k} = 0$ for $k < 0$. Summing over $i \in \{0, \ldots, m\}$, we get
\begin{align*}
    \sum_{i=0}^{m}{sm+1 \choose i}
    &= \sum_{i=0}^{m}{sm-1 \choose i} + 2\sum_{i=1}^{m}{sm-1 \choose i-1} + \sum_{i=2}^{m}{sm-1 \choose i-2} \\
    &= 4\sum_{i=0}^{m-2}{sm-1 \choose i} + 3{sm-1 \choose m-1} + {sm-1 \choose m} \\
    &= 4\sum_{i=0}^{m-1}{sm-1 \choose i} - {sm-1 \choose m-1} + {sm-1 \choose m}.
\end{align*}

Finally, substituting this into \eqref{eq: qsz_c=1}, we obtain
{\small
\begin{align*}
    |\overline{\mathcal{Q}(m, s, s-1)}|
    &= 4\sum_{i=0}^{m-1}{sm-1 \choose i} - {sm-1 \choose m-1} + {sm-1 \choose m} - {sm - 2 \choose m} + {sm - 2 \choose m - 2} \\
    &= 4\sum_{i=0}^{m-1}{sm-1 \choose i} + \left({sm-1 \choose m} - {sm - 2 \choose m}\right) - \left({sm-1 \choose m-1} - {sm - 2 \choose m - 2}\right) \\
    &= 4\sum_{i=0}^{m-1}{sm-1 \choose i} + {sm-2 \choose m-1} - {sm-2 \choose m-1} \\
    &= 4\sum_{i=0}^{m-1}{sm-1 \choose i}
     = |\overline{\mathcal{W}(m, s, s-1)}|.
\end{align*}
}
\end{proof}
	\section{Overview of the proof}\label{sec3}
Let us recall the notion of shifting. We will denote a set $\{a_1, \ldots, a_k\}$ by $(a_1, \ldots, a_k)$ if $a_1 < \ldots < a_k$ and we want to emphasize that its elements are listed in increasing order. We say that a set $(a_1, \ldots, a_k)$ can be shifted to a set $(b_1, \ldots, b_k)$ if $a_i \geq b_i$ for all $i \in [k]$. A family $\mathcal{F}$ is called shifted if, whenever $A \in \mathcal{F}$, we also have $B \in \mathcal{F}$ for every $B$ such that $A$ can be shifted to $B$. It is well known \cite{Frankl_Shifting} that the maximum size of a family of subsets of an $n$-element set avoiding an $s$-matching is attained by a shifted family. Therefore, it is enough to consider shifted families.

    Let $\mathcal{F} \subset 2^{[n]}$ be a family of subsets of $[n]$.
    We denote by $\mathcal{F}^{(i)}$ the $i$-th layer of $\mathcal{F}$, that is, the family $\mathcal{F} \cap {[n] \choose i}$. Let $y_{\mathcal{F}}(i)$ be the number of $i$-element sets, which are not in $\mathcal{F}$, that is, $y_{\mathcal{F}}(i) = {n \choose i} - |\mathcal{F}^{(i)}|$.

The main part of the proof is the analysis of $y(m)$ and $y(m+1)$. In Section~\ref{s.ym_ym+1} we use a counting argument to prove that $y(m) + y(m+1)$ is minimized by the family $\mathcal{Q}(m, s, \ell)$. Moreover, we establish a stability statement showing that if $\mathcal{F}$ is a shifted family avoiding an $s$-matching and $\mathcal{F} \nsubseteq \mathcal{Q}(m, s, \ell)$, then
\[
y_{\mathcal{F}}(m) + y_{\mathcal{F}}(m+1)
\ge
y_{\mathcal{Q}(m,s,\ell)}(m) + y_{\mathcal{Q}(m,s,\ell)}(m+1) + \Theta(s^{m-1}).
\]

Since the family $\mathcal{Q}(m, s, \ell)$ contains all sets of size at least $m+2$, except for $O(s^{m-2})$ sets, this stability result immediately implies that $\mathcal{Q}(m, s, \ell)$ is the largest family avoiding an $s$-matching among all families that do not contain sets of size exactly $m-1$. In Section~\ref{s.no_m-1} we prove that any family containing an $(m-1)$-element set is smaller than $\mathcal{Q}(m, s, \ell)$. To this end, we first use the stability result for $y(m) + y(m+1)$ to show that $e(n, s)$ cannot be much larger than $|\mathcal{Q}(m, s, \ell)|$. Then, assuming that a family $\mathcal{F}$ with $\nu(\mathcal{F}) < s$ contains some $(m-1)$-element set $X$, we consider the family $\mathcal{F}' = \mathcal{F} \cap 2^{[n]\setminus X}$ and using the stability statement conclude that $\overline{\mathcal F'}$ is large, which in turn implies that $\mathcal F$ is smaller than $\mathcal{Q}(m, s, \ell)$.
%Then, assuming that a family $\mathcal{F}$ with $\nu(\mathcal{F}) < s$ contains some $(m-1)$-element set $X$, we consider the family $\mathcal{F}' = \mathcal{F} \cap 2^{[n]\setminus X}$ and using the previous statement conclude that $\overline{\mathcal F'}$ is large, which in turn implies that $\mathcal F$ is smaller than $\mathcal{Q}(m, s, \ell)$.

\section{Missing sets on the layers $m$ and $m+1$} \label{s.ym_ym+1}

We have the following for the prospective extremal family:
\[
y_{\mathcal{Q}(m, s, \ell)}(m) + y_{\mathcal{Q}(m, s, \ell)}(m + 1) = (2c+1){n \choose m - 1} + O(s^{m - 2}).
\]
In this section, we prove a stability theorem for $y(m) + y(m + 1)$.

% \begin{theorem}
%     Let $n = ms + c, c \geq 2, m \geq 2$. Let $\mathcal{F} \subset 2^{[n]}$ be a shifted family and $\nu(\mathcal{F}) < s$. If $\mathcal{F} \nsubseteq \mathcal{Q}(m, s, \ell)$, then $y_{\mathcal{F}}(m) + y_{\mathcal{F}}(m + 1) \geq (2c+2){n \choose m - 1} + O(s^{m - 2})$.
% \end{theorem}

\begin{theorem} \label{t.layer_stability}
    Let $n = ms + c$, $m \geq 2$. Let $\mathcal{F} \subset 2^{[n]}$ be a shifted family and $\nu(\mathcal{F}) < s$. Then
    \[
    y_{\mathcal{F}}(m) + y_{\mathcal{F}}(m + 1) \geq (2c+2){n \choose m - 1} + O(s^{m - 2})
    \]
    if one of the following holds:
    \begin{enumerate}
        \item $c \geq 2$ and $\mathcal{F} \nsubseteq \mathcal{Q}(m, s, \ell)$;
        \item $c = 1$, $\mathcal{F} \nsubseteq \mathcal{Q}(m, s, \ell)$, and $\mathcal{F} \nsubseteq \mathcal{W}(m, s, \ell)$.
    \end{enumerate}
\end{theorem}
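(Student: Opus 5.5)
Our approach is a counting argument over $(m-1)$-element "cores", reducing the statement to a local problem on the remaining elements. For an $(m-1)$-set $T\subset[n]$, the sets of size $m$ containing $T$ are $T\cup\{x\}$ and the $(m+1)$-sets are $T\cup\{x,y\}$. Each missing $m$-set is counted $m$ times and each missing $(m+1)$-set ${m+1\choose 2}$ times, so averaging over cores needs weights. We work only with cores $T\subset[n']$, where $n'$ is large but equals $ms-C$ for a constant $C$, and handle the $O(s^{m-2})$ remaining error terms by the asymptotic slack. Because $m$ and $c$ are fixed, all but $O(s^{m-2})$ of the $(m-1)$-sets are ``generic'': their elements are spread out, and the structure of $\mathcal F$ near the top elements $[ms-c,n]$ is what matters. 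Since $\mathcal F$ is shifted, whether $T\cup\{x\}\in\mathcal F$ depends essentially on how far $x$ lies toward the end of the ground set. The missing sets therefore concentrate on $m$-sets whose last element is among the last $O(1)$ positions, and on $(m+1)$-sets whose last two elements are there.

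\textbf{Step 1: reduction to a local parameter.} By shiftedness and $\nu(\mathcal F)<s$, we would first show that $\mathcal F^{(m)}$ cannot contain the set $(1,\dots,m-1,n-j)$ for too many $j$. Using $s$ disjoint sets we get a perfect-matching style obstruction. The $s$ sets of sizes $m$ and $m+1$ must cover at most $n=ms+c$ points, so at most $c$ of them have size $m+1$. The key quantitative claim is the following. Let $a$ be the number of top positions $j$ for which the ``generic'' $m$-set ending at $n-j$ is missing, and let $b$ be the number of top pairs for which the generic $(m+1)$-set is missing. Then $\mathcal F$ avoids an $s$-matching only if $(a,b)$ satisfies a matching-type constraint. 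We verify this constraint by exhibiting an explicit $s$-matching: greedily complete with generic $m$-sets on $[ms-C]$ plus a few $(m+1)$-sets and $m$-sets using the top elements, in the spirit of Lemma~\ref{l.weak_duality} and its tightness. Summing over cores, $y(m)+y(m+1)$ is at least $\binom{n}{m-1}\cdot f(a,b)+O(s^{m-2})$, where $f(a,b)$ counts missing extensions per generic core: $a$ singletons plus the missing pairs among the top $a'$ positions.

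\textbf{Step 2: solve the finite optimization.} The possible local configurations form a shifted down-closed pattern on the top $O(1)$ elements. We then minimize $f$ subject to the matching constraint. For $\mathcal Q$ the local pattern is: the last $c+1$ elements fail as singletons (giving $c+1$), and among those, pairs fail only if both lie there beyond a threshold, contributing $c$ more. This gives $2c+1$. For $\mathcal W$ with $c=1$, the pattern has $2$ singletons and the single pair of them missing, giving $3=2c+1$. We expect every other admissible pattern to give $f\ge 2c+2$, since $f$ is integer-valued and the minimizers are exactly these patterns. Showing this is a short case analysis on the number $a$ of failed top singletons. With $a\le c$, the obstruction forces many pairs to fail (more than $c+2$). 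With $a=c+1$, minimality forces the $\mathcal Q$ pattern. With $a\ge c+2$, the count is already $\ge 2c+2$.

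\textbf{Step 3: nongeneric deviation implies a local deviation.} If $\mathcal F\not\subseteq\mathcal Q$ (and $\not\subseteq \mathcal W$ when $c=1$), then by shiftedness some minimal set outside $\mathcal Q$ lies in $\mathcal F$. We then need to show that this forces a positive fraction of generic cores to have a non-optimal local pattern, which yields the extra $\binom{n}{m-1}$. This step is the main obstacle. A set of $\mathcal F\setminus\mathcal Q$ of size $m$ or $m+1$ shifts down to a generic-core version, which changes the local pattern for $\Theta(s^{m-1})$ cores. However, sets in $\mathcal F\setminus\mathcal Q$ of size $\le m-1$ or $\ge m+2$ need separate handling. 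Sets of size $\ge m+2$ lie in $\mathcal Q$ except for those heavily using the top elements. For these, we would use shiftedness to push them into an $m$- or $(m+1)$-set configuration, or argue directly that they create an $s$-matching together with the forced local structure. For these we first try the explicit weighted matching construction.
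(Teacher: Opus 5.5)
\textbf{There is a genuine gap, and it is in Step 1.} Your reduction assumes that for all but $O(s^{m-2})$ cores $T$ the pattern of missing extensions $T\cup\{x\}$, $T\cup\{x,y\}$ is the same, and that it lives on the last $O(1)$ elements. Shiftedness does not give this. It only makes the number $a(T)$ of missing extensions monotone as $T$ is shifted down, and thresholds can sit at positions of order $s$.

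\begin{itemize}
\item In $\mathcal P(m,s,\ell)$, every core meeting $[\ell-1]$ has no missing extension at all. These cores form a positive fraction of all cores.
\item The bulk of the missing sets of $\mathcal P(m,s,\ell)$ lies inside $[n']$, and your count $\sum_T (a(T)+b(T))$ discards exactly those sets.
\item So a per-core bound is false, and the ``matching-type constraint'' on $(a,b)$, which you never state precisely, cannot be checked by one explicit matching.
\end{itemize}

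Showing that $\mathcal F^{(m)}$ is clique-like is essentially the Erd\H{o}s Matching Conjecture in the regime $n\le (m+\varepsilon)\ell$. That is the missing global input. The paper introduces the deletion number $d(\mathcal F)$ and applies Frankl's Theorem~\ref{t.frankl_EMC} to get $y_{\mathcal F}(m)\ge((m+1)c-d(\mathcal F)+1)\binom{n}{m-1}+O(s^{m-2})$ for an arbitrary family.

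\textbf{Step 2 is set up incorrectly even if the reduction is granted.}

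\begin{itemize}
\item For a core inside $[ms-c-1]$, $\mathcal Q(m,s,\ell)$ misses all $2c+1$ singletons of $[ms-c,n]$ and no pairs.
\item The pattern you describe, with $c+1$ missing singletons plus missing pairs among them, is the $\mathcal W$-type pattern. $\mathcal W(m,s,\ell)$ misses $c+1$ singletons and all $\binom{c+1}{2}$ pairs among them, which equals $2c+1$ only when $c=1$.
\end{itemize}

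Your case split is therefore inverted:

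\begin{itemize}
\item $a\le c$ cannot occur, since by shifting it would give a perfect matching of $[ms]$.
\item $a\ge c+2$ does not give $2c+2$, because $\mathcal Q$ itself has $a=2c+1$.
\item The real work is the range $c+1\le a\le 2c$, that is, $(m-1)c<d(\mathcal F)\le mc$. There one must show that about $(2c-1)\binom{n}{m-1}$ sets of size $m+1$ are missing.
\end{itemize}

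The paper handles that range in Lemma~\ref{l.d_geq_mc-c+1} with a random $s$-matching. It takes an $\ell$-matching of $\mathcal F^{(m)}$ avoiding a random $(m-1)c$-set $X\subset[ms-c]$, guaranteed by Lemma~\ref{l.deleteon_with_shifting}, and adds a random $c$-matching from $\binom{X}{m-1}\times\binom{[ms-c+1,n]}{2}$.

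\textbf{Step 3 is left open.} Your worry about members of $\mathcal F\setminus\mathcal Q$ of size $\le m-1$ or $\ge m+2$ disappears with the device of Lemma~\ref{l.stability_d_eq_mc-c}. Take any $F\in\mathcal F\setminus\mathcal Q$, of any size, and put $F$ itself into the random $s$-matching; no shifting to size $m$ or $m+1$ is needed. With $x=|F\cap[ms-c-1]|$, complete it with an $(\ell+m-x-2)$-matching of $\mathcal F^{(m)}$ from Lemma~\ref{l.deleteon_with_shifting}. Then add random sets consisting of an $(m-1)$-element core plus two top elements. Finally, for $c=1$, $d(\mathcal F)=m$ directly forces $\mathcal F\subseteq\mathcal W$.
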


Note that the assumption $\mathcal{F} \nsubseteq \mathcal{W}(m, s, \ell)$ in the second case of the theorem is crucial, since $\mathcal{W}(m, s, \ell) \nsubseteq \mathcal{Q}(m, s, \ell)$ and
\[
y_{\mathcal{W}(m, s, s-1)}(m) + y_{\mathcal{W}(m, s, s-1)}(m+1) = 3{n \choose m - 1} + O(s^{m - 2}).
\]

Theorem~\ref{t.layer_stability} immediately implies that for $c \geq 2$ and $s \geq s_0(m, c)$ the family $\mathcal{Q}(m, s, \ell)$ has the smallest possible value of $y(m) + y(m + 1)$ among families avoiding an $s$-matching. However, Theorem~\ref{t.layer_stability} does not immediately imply Theorem~\ref{t.main}, even for $c \geq 2$, because $\overline{\mathcal{Q}(m, s, \ell)}$ contains ${n \choose m - 1}$ sets of size $m - 1$.

To prove Theorem~\ref{t.layer_stability}, we bound $y(m)$ and $y(m+1)$ separately in terms of the value $d(\mathcal{F})$, defined below.

\begin{mydef}
    The \textit{deletion number} $d(\mathcal{F})$ is the minimum number of elements of the ground set $[n]$ that must be deleted in order to avoid an $\ell$-matching in $\mathcal{F}^{(m)}$. Formally,
    \[
    d(\mathcal{F}) = \min\Big\{|X| : \nu\Big(\mathcal{F} \cap {[n] \setminus X \choose m}\Big) < \ell\Big\}.
    \]
\end{mydef}

Note that if $\nu(\mathcal{F}^{(m)}) < \ell$, then $d(\mathcal{F}) = 0$.

We will use the following simple lemma.

\begin{lemma} \label{l.deleteon_with_shifting}
    If $\mathcal{F}$ is shifted, $\ell \geq 1$, and $d(\mathcal{F}) > d$, then for any integer $k$, $0 \leq k \leq \frac{d}{m}$, and any $X \subset [d + m\ell]$ with $|X| = d - km$, the family $\mathcal{F} \cap {[d+m\ell] \setminus X \choose m}$ contains an $(\ell+k)$-matching.
\end{lemma}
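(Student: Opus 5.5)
The idea is to turn the hypothesis $d(\mathcal F)>d$ into a single $\ell$-matching on a suitable set of $d$ deleted elements. We then use shiftedness twice: first to push that matching onto a prescribed block of $[d+m\ell]$, and then to fill the remaining elements with $k$ further sets.

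Write $Z:=[d+m\ell]\setminus X$. Then $|Z|=d+m\ell-(d-km)=m(\ell+k)$, so the goal is to partition $Z$ into $\ell+k$ sets of $\mathcal F^{(m)}$. Let $T$ be the set of the $km$ \emph{smallest} elements of $Z$, and put $D:=X\cup T$. Then $|D|=d$ and $D\subset[d+m\ell]$.

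\textbf{Step 1.} Since $d(\mathcal F)>d\ge |D|$, the family $\mathcal F\cap{[n]\setminus D\choose m}$ contains an $\ell$-matching $A_1,\ldots,A_\ell$. Let $U:=A_1\cup\cdots\cup A_\ell$, so $|U|=m\ell$ and $U\subset[n]\setminus D$.

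\textbf{Step 2 (shift onto $Z\setminus T$).} Since $D\subset[d+m\ell]$, the set $[d+m\ell]\setminus D$ has exactly $m\ell$ elements, and it equals $Z\setminus T$. Hence $Z\setminus T$ consists of the $m\ell$ smallest elements of $[n]\setminus D$. Let $\varphi:U\to Z\setminus T$ be the order-preserving bijection. Then $\varphi(u)\le u$ for every $u\in U$, because the $j$-th smallest element of $U$ is at least the $j$-th smallest element of $[n]\setminus D$. Consequently each $A_i$ can be shifted to $\varphi(A_i)$: the map $\varphi$ preserves order within $A_i$ and decreases every element. So $\varphi(A_i)\in\mathcal F$ by shiftedness, and $\varphi(A_1),\ldots,\varphi(A_\ell)$ is an $\ell$-matching in $\mathcal F$ covering $Z\setminus T$ exactly.

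\textbf{Step 3 (cover $T$).} Here we use $\ell\ge1$, so that the set $B:=\varphi(A_1)\in\mathcal F$ exists. Every element of $T$ is smaller than every element of $Z\setminus T\supset B$. Therefore any $m$-subset of $T$ can be shifted to from $B$, i.e.\ $B$ can be shifted to it, and so it lies in $\mathcal F$. Partition $T$, which has $km$ elements, into $k$ arbitrary $m$-sets; all of them belong to $\mathcal F$.

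Together with the $\ell$ sets $\varphi(A_i)$ this gives $\ell+k$ pairwise disjoint $m$-sets of $\mathcal F$ inside $Z=[d+m\ell]\setminus X$, which proves the lemma. The condition $k\le d/m$ is used only to guarantee $|X|=d-km\ge0$.

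The main point requiring care is the choice of $T$ as the \emph{smallest} elements of $Z$. This choice does two things at once. It makes the remaining block $Z\setminus T$ coincide with the initial segment of $[n]\setminus D$, which is what Step 2 needs. It also makes all of $T$ lie below some set already known to be in $\mathcal F$, which is what Step 3 needs.
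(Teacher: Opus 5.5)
Your proof is correct, but it is organized differently from the paper's.

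The paper argues by contradiction. It assumes $\mathcal F'=\mathcal F\cap\binom{[d+m\ell]\setminus X}{m}$ has no $(\ell+k)$-matching and takes a largest matching $X_1,\dots,X_{\ell'}$ in $\mathcal F'$. It then deletes $X$ together with $k'=\min\{k,\ell'\}$ of these sets, obtaining $Y$ with $|Y|\le d$. Maximality of the matching shows that $[d+m\ell]\setminus Y$ carries no $\ell$-matching of $\mathcal F$. Shiftedness lifts this to $[n]\setminus Y$, which contradicts $d(\mathcal F)>d$.

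So in the paper the $k$ extra sets come from an augmentation argument on a maximum matching, which needs no shiftedness. Shiftedness is used only once, to push an $\ell$-matching from $[n]\setminus Y$ down into $[d+m\ell]\setminus Y$, and the paper asserts this step without detail.

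You instead fix the deleted set in advance as $D=X\cup T$, with $T$ the $km$ smallest elements of $Z$. This gives two things:
\begin{itemize}
\item The $\ell$-matching avoiding $D$ can be pushed exactly onto $Z\setminus T$. Your order-preserving bijection $\varphi$ is precisely the justification the paper omits.
\item Shiftedness can be used a second time to supply the $k$ extra sets: every $m$-subset of $T$ lies below $\varphi(A_1)\in\mathcal F$.
\end{itemize}

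Your route is direct and constructive, and it avoids the case split on $k'$. The paper's route needs less about the extra sets: they come from any maximum matching, not from being dominated by a known member of $\mathcal F$. That part of the argument would survive where your specific choice of $T$ and the domination trick are unavailable. Both give complete proofs of the lemma.
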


\begin{proof}
Assume that for some $X$ of size $d-km$ the family $\mathcal{F}':=\mathcal{F}\cap {[d+m\ell]\setminus X\choose m}$ does not contain an $(\ell+k)$-matching. Take a largest matching $\mathcal{M}:=\{X_1,\ldots, X_{\ell'}\}$ in $\mathcal{F}'$ and consider the set
\[
Y = X\cup X_1\cup\ldots\cup X_{k'},
\]
where $k' = \min \{k,\ell'\}$. Then, since $\mathcal{M}$ is a largest matching, the family $\mathcal{F}\cap {[d+m\ell]\setminus Y\choose m}$ does not contain an $\ell$-matching. At the same time, $|Y| \le d$.

Now, if $\mathcal{F}\cap {[n]\setminus Y\choose m}$ contains an $\ell$-matching, then, using the fact that $\mathcal{F}$ is shifted, we can obtain an $\ell$-matching in $[d+m\ell]\setminus Y$. We conclude that $\mathcal{F}\cap {[n]\setminus Y\choose m}$ does not contain an $\ell$-matching. This contradicts the assumption that $d(\mathcal{F})>d$.
\end{proof}

In the subsequent calculations we will use the following simple and standard estimate, which we state without proof.

\begin{lemma} \label{l.binom_asymp}
    Let $n = ms + c$, where $m$ and $c$ are constant and $s \rightarrow \infty$. Then for any nonnegative integer functions $f(m, c)$ and $g(m, c)$ we have
    \[
    {n - f(m, c) \choose m} - {n - f(m, c) - g(m, c) \choose m}
    = g(m,c){n \choose m - 1} + O(n^{m-2}).
    \]
\end{lemma}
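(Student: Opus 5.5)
The statement in question is Lemma~\ref{l.binom_asymp}, which the paper states without proof. The plan is to telescope the difference and estimate each term separately.

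Write $f=f(m,c)$ and $g=g(m,c)$; both are constants as $s\to\infty$. Pascal's rule ${N\choose m}-{N-1\choose m}={N-1\choose m-1}$ gives the telescoping identity
\[
{n-f\choose m}-{n-f-g\choose m}=\sum_{j=1}^{g}{n-f-j\choose m-1}.
\]
This reduces the lemma to a statement about the $g$ summands.

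It now suffices to show that ${n-t\choose m-1}={n\choose m-1}+O(n^{m-2})$ for each constant $t=f+j$ with $1\le j\le g$. To see this, telescope again:
\[
{n\choose m-1}-{n-t\choose m-1}=\sum_{i=1}^{t}{n-i\choose m-2}.
\]
The right-hand side is a sum of $t=O(1)$ terms, each bounded by ${n\choose m-2}=O(n^{m-2})$. When $m=1$ every term ${n-i\choose -1}$ is zero by convention, so the claim holds trivially.

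Summing over the $g=O(1)$ values of $j$ then gives $g{n\choose m-1}+O(n^{m-2})$, which is the lemma. For the degenerate case $g=0$, both sides are $0$.

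None of the steps is a real obstacle. The only point to keep in mind is that the implied constant in $O(n^{m-2})$ depends on $f$, $g$ and $m$, and this is permitted because $m$ and $c$ are fixed.
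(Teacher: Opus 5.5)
Your proof is correct and complete. The paper gives no proof of this lemma (it calls it a ``simple and standard estimate''), so there is no argument of the authors' to compare with, and your double telescoping via Pascal's rule fills that gap cleanly.

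Your argument also yields slightly more than the statement. Each summand ${n-f-j\choose m-1}$ is at most ${n\choose m-1}$ once $s$ is large enough that $n\ge f+g$. So you get the exact one-sided bound ${n-f\choose m}-{n-f-g\choose m}\le g{n\choose m-1}$, and this is the form the paper actually uses in the proof of Lemma~\ref{l.d_to_ym}. In the other direction you get an explicit error of at most $g(f+g){n\choose m-2}$.

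Your intermediate claim ${n-t\choose m-1}={n\choose m-1}+O(n^{m-2})$ is also exactly the form in which the estimate is invoked at the end of Lemma~\ref{l.d_geq_mc-c+1}, in Lemma~\ref{l.stability_d_eq_mc-c}, and in the proof of Theorem~\ref{t.main_stability}.
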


We will also use a result on the Erd\H{o}s Matching Conjecture in the case $n \leq (k + \varepsilon(k))s$, namely, the following theorem due to Frankl from \cite{Frankl_perfect_EMC}.

\begin{theorem} \label{t.frankl_EMC}
    For every $k \geq 2$ there exists a positive $\varepsilon(k)$ such that for $ks \leq n \leq (k + \varepsilon(k))s$ the family ${[ks-1] \choose k}$ is the largest family in ${[n] \choose k}$ without an $s$-matching.
\end{theorem}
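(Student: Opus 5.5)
The plan is to reduce to shifted families and compare an arbitrary extremal family with the clique by a perfect-matching averaging argument with a stability term, and to absorb the few "extra" vertices beyond $ks$ using shiftedness. First, by \cite{Frankl_Shifting} we may assume $\mathcal F\subset{[n]\choose k}$ is shifted with $\nu(\mathcal F)<s$ and $|\mathcal F|\ge {ks-1\choose k}$. Write $n=ks+r$ with $0\le r\le \varepsilon s$. The base case $r=0$ is the classical averaging argument. A uniformly random perfect matching of $[ks]$ consists of $s$ sets, at most $s-1$ of which lie in $\mathcal F$. Hence $|\mathcal F|\le \frac{s-1}{s}{ks\choose k}={ks-1\choose k}$. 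For shifted $\mathcal F$, equality forces every perfect matching to miss $\mathcal F$ in exactly one set, and shiftedness then pins down $\mathcal F={[ks-1]\choose k}$.

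For $r>0$, split $\mathcal F=\mathcal A\cup\mathcal B$, where $\mathcal A=\mathcal F\cap{[ks]\choose k}$ and $\mathcal B$ consists of the sets meeting $T=[ks+1,n]$. We have $|\mathcal B|\le r{n\choose k-1}=O(\varepsilon s)\cdot{n\choose k-1}$, which is $O(\varepsilon)\,{ks\choose k}$ but of order $s^{k}$, so it must be paid for. I would run the averaging argument on $[ks]$ quantitatively. Let $D$ be the expected number of sets of a random perfect matching of $[ks]$ that are \emph{not} in $\mathcal A$. Then $|\mathcal A|={ks\choose k}-\frac{D}{s}{ks\choose k}$, and the bound $D\ge1$ gives $|\mathcal A|\le{ks-1\choose k}$. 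The goal is to show that $\mathcal B\ne\emptyset$ forces
\[
D-1\ \ge\ \frac{s\,|\mathcal B|}{{ks\choose k}},
\]
that is, that a constant fraction of perfect matchings miss at least two sets of $\mathcal A$.

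The mechanism comes from shiftedness and $\nu<s$. If $\mathcal B\neq\emptyset$, then by shifting $\mathcal F$ contains a set $B_0$ with $\max B_0=ks+1$, so $B_0$ has $k-1$ elements in $[ks]$. Consider a perfect matching $\mathcal M$ of $[ks]$ together with such a set $B$ that meets $T$ in one point. Let $M$ be the set of $\mathcal M$ containing the $k-1$ points $B\cap[ks]$, or more generally let $\mathcal M'$ be the sets of $\mathcal M$ hit by $B$. Replacing $\mathcal M'$ by $B$ together with a rearrangement of $\bigcup\mathcal M'\setminus B$ plus spare tail points gives a new collection of $s$ pairwise disjoint sets. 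Since $\nu(\mathcal F)<s$, some of these must be missing. I would first try to prove a local exchange inequality: for each $B\in\mathcal B$ and each ``rerouting'' of a perfect matching through $B$, one obtains an extra missing $k$-set inside $[ks]$. The $r\ge1$ free tail vertices are what allow the rerouting. Double counting then converts this into the displayed bound on $D-1$.

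The main obstacle is making this double count lose only a constant factor independent of $s$, and then choosing $\varepsilon(k)$ small enough that the gain beats $|\mathcal B|\le r{n\choose k-1}$. The quantity $|\mathcal B|$ grows with $r$, while each set of $\mathcal B$ yields a deficit of order ${ks\choose k}/(s\cdot{n\choose k-1})$, which is of order $1$ per set. So the comparison is between constants depending on $k$, and $\varepsilon(k)$ is fixed at the end to make the inequality $|\mathcal A|+|\mathcal B|\le{ks-1\choose k}$ hold, with equality only when $\mathcal B=\emptyset$. If the direct exchange argument proves too lossy, the fallback is Frankl's approach: bound the link families $\mathcal F(\bar i)$ and $\mathcal F(i)$ of the shifted family inductively in $k$, using the $(k-1)$-uniform statement for the links of the vertices in $T$.
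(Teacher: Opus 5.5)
\textbf{There is a genuine gap: the key inequality is the theorem itself, and the sketched mechanism cannot supply it.} The paper does not prove this theorem; it quotes it from Frankl without proof. So your sketch has to stand on its own, and as written it does not.

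Your preliminary steps are fine: the reduction to shifted families, the split $\mathcal F=\mathcal A\cup\mathcal B$, the identity $|\mathcal A|=\binom{ks}{k}(1-D/s)$, and Kleitman's averaging for $r=0$. But substituting that identity shows that your displayed goal $D-1\ge s|\mathcal B|/\binom{ks}{k}$ is \emph{literally equivalent} to $|\mathcal A|+|\mathcal B|\le\binom{ks-1}{k}$, that is, to the theorem. All of the content therefore sits in the ``local exchange inequality'' that you only announce you would try to prove. Your fallback, ``Frankl's approach'', is just the cited result.

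The exchange as you describe it also cannot deliver the goal.

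(a) Suppose a perfect matching $\mathcal M$ of $[ks]$ has a missing block that your rerouting through $B$ does not touch. Then the rerouted collection still contains that block, so $\nu(\mathcal F)<s$ is satisfied for free and nothing new is forced. Since $B$ meets at most $k-1$ blocks, a single $B$ says something only about matchings with at most $k-1$ missing blocks. Even then, the forced non-member may be one of the rearranged sets rather than a block of $\mathcal M$, so it need not increase $D$ for $\mathcal M$.

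(b) Quantitatively, since $\binom{ks}{k}/s=\binom{ks-1}{k-1}$, your goal reads $D-1\ge|\mathcal B|/\binom{ks-1}{k-1}$. Here $|\mathcal B|$ can be of order $r\binom{n}{k-1}$ with $r$ up to $\varepsilon s$. So typical perfect matchings must miss $\Theta(r)$ sets of $\mathcal A$, not ``at least two'' as in your paraphrase. Producing that requires rerouting through many sets of $\mathcal B$ simultaneously while controlling overlaps, which is exactly what is left open.

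(c) The role of $\varepsilon(k)$ is not coherent. The target $\binom{ks-1}{k}-|\mathcal A|\ge|\mathcal B|$ is linear in $|\mathcal B|$, so a per-set deficit constant $c_k<1$ from a local exchange cannot be rescued by shrinking $\varepsilon$. The smallness of $r/s$ must enter essentially: for $n=(k+1)s$ and large $s$, the family $\{F:F\cap[s-1]\neq\emptyset\}$ is already larger than $\binom{[ks-1]}{k}$. Your sketch never says where it enters.

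A minor point: your uniqueness claim at $r=0$ is false for $s=2$. For $k=2$ on $[4]$, the star $\{12,13,14\}$ is also extremal. For $s\ge 3$, uniqueness needs EKR uniqueness applied to the intersecting family of missing sets, not just shiftedness. Uniqueness is not part of the statement anyway.
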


The proof in \cite{Frankl_perfect_EMC} gives the bound $\varepsilon(k) = \frac{1}{2k^{2k+1}}$. Note that in \cite{KoKu},  Kolupaev and the first author obtained a much better dependence, namely $\varepsilon(k) = \frac{1}{100k}$ for $k \geq 5$.

The next lemma bounds $y_{\mathcal{F}}(m)$ in terms of $d(\mathcal{F})$.

\begin{lemma} \label{l.d_to_ym}
    Let $m \geq 2$ and $n = ms+c$. If $\mathcal{F} \subset 2^{[n]}$ is a family with $d(\mathcal{F}) \leq d$, then
    \[
    y_{\mathcal{F}}(m) \geq ((m+1)c-d+1){n \choose m - 1} + O(s^{m - 2}).
    \]
\end{lemma}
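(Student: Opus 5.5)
The plan is to fix one optimal deletion set, apply Frankl's theorem (Theorem~\ref{t.frankl_EMC}) to the $m$-uniform layer on the remaining ground set, and convert the resulting bound into a count of missing $m$-sets using Lemma~\ref{l.binom_asymp}.

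\emph{Trivial case.} If $d \ge (m+1)c+1$, then the coefficient $(m+1)c-d+1$ is nonpositive. Since $y_{\mathcal F}(m)\ge 0$, there is nothing to prove. So assume from now on that $d\le (m+1)c$.

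\emph{Choosing the deletion set.} Since $d(\mathcal F)\le d$, there is a set $X\subset[n]$ with $|X|\le d$ such that
\[
\mathcal F_X:=\mathcal F\cap {[n]\setminus X\choose m}
\]
has no $\ell$-matching. Put $n':=n-|X|$. Using $\ell=s-c$, we get
\[
n' = ms+c-|X| = m\ell+\bigl((m+1)c-|X|\bigr).
\]
Because $0\le |X|\le d\le (m+1)c$, this gives $m\ell\le n'\le m\ell+(m+1)c$. The quantities $m,c$ are fixed while $\ell=s-c\to\infty$. Hence for $s$ large enough we have $m\ell\le n'\le (m+\varepsilon(m))\ell$.

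\emph{Applying Frankl's theorem.} Since $m\ge 2$, Theorem~\ref{t.frankl_EMC} applies with $k=m$, with the ground set $[n]\setminus X$ of size $n'$, and with $\ell$ playing the role of $s$. It yields
\[
|\mathcal F_X|\le {m\ell-1\choose m}.
\]

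\emph{Counting missing sets.} Every $m$-subset of $[n]\setminus X$ that is not in $\mathcal F_X$ is an $m$-set missing from $\mathcal F$. Therefore
\[
y_{\mathcal F}(m)\ \ge\ {n'\choose m}-{m\ell-1\choose m}.
\]
Write $n'=n-|X|$ and $m\ell-1 = n-\bigl((m+1)c+1\bigr)$. Both shifts $|X|$ and $(m+1)c+1$ are bounded by constants depending only on $m,c$. Lemma~\ref{l.binom_asymp} then evaluates the right-hand side as
\[
\bigl((m+1)c+1-|X|\bigr){n\choose m-1}+O(s^{m-2}).
\]
Finally, $|X|\le d$, so this is at least $((m+1)c-d+1){n\choose m-1}+O(s^{m-2})$, which is the claimed bound.

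\emph{Main obstacle.} The only delicate point is verifying that Theorem~\ref{t.frankl_EMC} is applicable. This needs $m\ge 2$, which is assumed, and $n'$ lying in the window $[m\ell,(m+\varepsilon(m))\ell]$. The lower end of the window is exactly why the case $d>(m+1)c$ had to be separated out first. The upper end holds only for $s\ge s_0(m,c)$, which is consistent with the asymptotic form of the statement.
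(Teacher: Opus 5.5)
Your proof is correct and follows the paper's argument. Fix a deletion set $X$, apply Theorem~\ref{t.frankl_EMC} on $[n]\setminus X$ with $\ell$ in the role of $s$ to get $|\mathcal F\cap{[n]\setminus X\choose m}|\le{m\ell-1\choose m}$, then convert via $m\ell-1=n-(m+1)c-1$ and Lemma~\ref{l.binom_asymp}. The differences are cosmetic. The paper takes $|X|=d$ and uses the exact bound ${n\choose m}-{n-d\choose m}\le d{n\choose m-1}$. It also does not separate out $d>(m+1)c$, since the bound ${m\ell-1\choose m}$ holds trivially when $|[n]\setminus X|<m\ell$.
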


\begin{proof}
    Let $X$ be a set of size $d$ such that $\nu(\mathcal{F} \cap {[n] \setminus X \choose m}) < \ell$, and let
    \[
    \mathcal{F'} = \mathcal{F} \cap {[n] \setminus X \choose m}.
    \]
    Since $m$ and $c$ are constants and $s$ tends to infinity, we may assume that
    $
    s > \frac{c(m+1)}{\varepsilon(m)} + c,
    $
    which is equivalent to $n < \ell(m + \varepsilon(m))$. Therefore, by Theorem~\ref{t.frankl_EMC} we get
    \[
    |\mathcal{F'}| \leq {m\ell - 1 \choose m}.
    \]

    Since
    \[
    \mathcal{F}^{(m)} \setminus \mathcal{F'} \subset {[n] \choose m} \setminus {[n] \setminus X \choose m},
    \]
    we obtain
    \[
    |\mathcal{F}^{(m)}| \leq {n \choose m} - {n - d \choose m} + {m\ell - 1 \choose m}.
    \]
    In particular,
    \[
    |\mathcal{F}^{(m)}| \leq d{n \choose m - 1} + {m\ell - 1 \choose m},
    \]
    and by Lemma~\ref{l.binom_asymp},
    \begin{align*}
        y_{\mathcal{F}}(m)
        &\geq {n \choose m} - d{n \choose m - 1} - {m\ell - 1 \choose m} \\
        &= \left({n \choose m} - {n - (m+1)c - 1 \choose m}\right) - d{n \choose m - 1} \\
        &= ((m+1)c-d+1){n \choose m - 1} + O(s^{m - 2}).
    \end{align*}
\end{proof}

Lemma~\ref{l.d_to_ym} implies that if $d(\mathcal{F}) < (m-1)c$, then
\[
y_{\mathcal{F}}(m) \geq (2c+2){n \choose m - 1} + O(s^{m - 2}).
\]
This bound alone is sufficient to prove Theorem~\ref{t.layer_stability} in this case. The next two lemmas allow us to lower bound the contribution of $y(m+1)$ in the case when  $d(\mathcal{F}) > (m-1)c$ and $d(\mathcal{F}) = (m-1)c$, respectively.

\begin{lemma} \label{l.d_geq_mc-c+1}
    If $\mathcal{F} \subset 2^{[n]}$ is a shifted family with $d(\mathcal{F}) > (m-1)c$, then
    \[
    y_{\mathcal{F}}(m+1) \geq (2c-1){n \choose m - 1} + O(s^{m - 2}).
    \]
\end{lemma}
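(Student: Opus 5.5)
We would apply Lemma~\ref{l.deleteon_with_shifting} with $d:=(m-1)c$ and $k=0$; throughout we use the standing assumption that $\nu(\mathcal F)<s$. The key arithmetic is
\[
d+m\ell=(m-1)c+m(s-c)=ms-c .
\]
So the lemma says the following: for every $X\subset[ms-c]$ with $|X|=(m-1)c$, the family $\mathcal F\cap{[ms-c]\setminus X\choose m}$ contains an $\ell$-matching. Write $T:=[ms-c+1,n]$ for the set of the last $2c$ elements. Now take any $c$ pairwise disjoint $(m+1)$-sets $G_1,\ldots,G_c$ whose union $U$ contains $T$. Then $|U\cap[ms-c]|=(m+1)c-2c=(m-1)c$. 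Putting $X:=U\cap[ms-c]$, we get $\ell$ pairwise disjoint $m$-sets of $\mathcal F$ inside $[ms-c]\setminus X$. These are disjoint from all $G_i$, and $\ell+c=s$. Since $\nu(\mathcal F)<s$, some $G_i$ is missing from $\mathcal F$.

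Next we specialize to $(m+1)$-sets of the form $P\cup A$, where $P\in{T\choose 2}$ and $A\in{[ms-c]\choose m-1}$. There are $\binom{2c}{2}\binom{n}{m-1}+O(s^{m-2})$ such sets, and they are the only ones we count. For each $A$ let $H_A$ be the graph on $T$ consisting of the pairs $P$ with $P\cup A\in\mathcal F$. Because $\mathcal F$ is shifted, each $H_A$ is a shifted graph on $T$. Moreover, $H_A\supseteq H_{A'}$ whenever $A'$ can be shifted to $A$. The previous paragraph gives the following \emph{rainbow} condition: for pairwise disjoint $A_1,\ldots,A_c$ there is no perfect matching $P_1,\ldots,P_c$ of $T$ with $P_i\in H_{A_i}$. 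In the diagonal case $A_1,\ldots,A_c$ all with the same graph $H$, this says that $H$ has no perfect matching. The Erd\H{o}s--Gallai bound for graphs on $2c$ vertices without a perfect matching gives
\[
|E(H)|\le\max\Bigl\{\tbinom{2c-1}{2},\ \tbinom{c-1}{2}+(c-1)(c+1)\Bigr\},
\]
so at least $2c-1$ of the $\binom{2c}{2}$ pairs are missing. Summing $\binom{2c}{2}-|H_A|\ge 2c-1$ over all $A$ would give $y_{\mathcal F}(m+1)\ge(2c-1){n\choose m-1}+O(s^{m-2})$.

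The main obstacle is passing from the diagonal statement to the rainbow one: the graphs $H_A$ genuinely vary with $A$. To handle this, we would first restrict to the $(1-o(1)){n\choose m-1}$ sets $A$ disjoint from a fixed initial segment $[K]$ with $K=K(m,c)$. We would then argue that the graphs $H_A$ can be partitioned into $O(1)$ types, since there are only finitely many shifted graphs on $2c$ vertices. Within one type the union of the $(m-1)$-sets is huge, so we can pick $c$ pairwise disjoint $A_i$ of the same type. The rainbow condition then reduces to the diagonal one, and each type contributes at least $(2c-1)$ missing pairs per $A$.

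If some type class is too sparse to supply $c$ disjoint members, it has size $O(s^{m-2})$ and can be absorbed into the error term. This absorption step needs care, because a type class of small total size can still cover many elements. If that difficulty arises, we would instead use shiftedness to compare each $A$ with a "later" set $A'$ of the dominant type, using $H_A\supseteq H_{A'}$. Alternatively, we would invoke Lemma~\ref{l.deleteon_with_shifting} with $k\ge1$, which reduces the problem to fewer $(m+1)$-sets and thus to smaller rainbow configurations.
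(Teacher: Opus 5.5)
Your argument is correct, but after the common first step it follows a different route from the paper's. Both proofs start from Lemma~\ref{l.deleteon_with_shifting} with $k=0$ and arrive at your rainbow condition.

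\textbf{The paper's route.} It handles the rainbow condition in one averaging step. It picks $X\in\binom{[ms-c]}{(m-1)c}$ uniformly, adds the fixed $\ell$-matching $\pi_X$, and then adds a uniformly random $c$-matching from $\binom{X}{m-1}\times\binom{T}{2}$. Every set of $\binom{[ms-c]}{m-1}\times\binom{T}{2}$ lands in this $s$-matching with the same probability $c\binom{ms-c}{m-1}^{-1}\binom{2c}{2}^{-1}$. The expected number of missing sets is at least $1$, which gives the exact bound $(2c-1)\binom{ms-c}{m-1}$. This needs no Erd\H{o}s--Gallai and no classification, and the same template is reused in Lemma~\ref{l.stability_d_eq_mc-c}.

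\textbf{Your route.} You pigeonhole the link graphs $H_A$ into at most $2^{\binom{2c}{2}}$ classes. You then apply Erd\H{o}s--Gallai to each class containing $c$ pairwise disjoint members: there the common graph has no perfect matching, so at least $2c-1$ pairs are missing. This gives a worse constant in the $O(s^{m-2})$ term. In exchange it yields a pointwise structural statement: all but $O(s^{m-2})$ of the graphs $H_A$ have no perfect matching, which is stronger than the paper's average bound. Note that $2c-1=\binom{2c}{2}/c$ is exactly the fractional bound the averaging produces; Erd\H{o}s--Gallai shows the integral bound coincides with it here.

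\textbf{One justification to fix.} The sentence ``the union of the $(m-1)$-sets is huge, so we can pick $c$ pairwise disjoint $A_i$'' is not a valid inference. For example, all $(m-1)$-sets through a fixed point have huge union but no two disjoint members. The correct dichotomy is the one in your next sentence, and its proof is a covering bound. If a class $\mathcal C$ has no $c$ pairwise disjoint members, the union of a maximal matching in $\mathcal C$ has at most $(c-1)(m-1)$ elements and meets every member of $\mathcal C$. Hence $|\mathcal C|\le (c-1)(m-1)\binom{n}{m-2}$.

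Classes are treated independently and there are $O(1)$ of them, so the small classes have total size $O(s^{m-2})$. Your worry about absorbing them is therefore unfounded. The following are all unnecessary: restricting to $A$ disjoint from $[K]$, the shiftedness of $H_A$, and the fallback options in your last paragraph.
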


\begin{proof}
    By Lemma \ref{l.deleteon_with_shifting} for $k=0$, for any
    \[
    X \in {[m\ell+(m-1)c] \choose (m-1)c} = {[ms-c] \choose (m-1)c},
    \]
    the family $\mathcal{F} \cap {[ms-c] \setminus X \choose m}$ contains an $\ell$-matching. For each $X$, fix one such matching $\pi_{X}$.

    Put 
    \[ \m G:={[ms-c] \choose m-1} \times {[ms-c+1, n] \choose 2}.\]
    Construct a random $s$-matching $\pi$ in
    $
    \big(\mathcal{F} \cap {[ms-c] \choose m}\big) \cup \m G
    $
    by the following procedure.
    \begin{itemize}
        \item Choose $X \in {[ms-c] \choose (m-1)c}$ uniformly at random.
        \item Add $\pi_{X}$ to $\pi$. Note that all sets added to $\pi$ in this step belong to $\mathcal{F}$.
        \item Choose a $c$-matching in ${X \choose m-1} \times {[ms-c+1, n] \choose 2}$ uniformly at random and add it to $\pi$. Note that such matchings exist, since $|X|=(m-1)c$ and $|[ms-c+1, n]| = 2c$.
    \end{itemize}

    Let $\xi$ be the number of sets from $\overline{\mathcal{F}}$ in $\pi$. On the one hand, $\xi \geq 1$, since $\pi$ is an $s$-matching and $\mathcal{F}$ does not contain an $s$-matching. Therefore, $\mathbb{E}\xi \geq 1$. On the other hand, for any set $F \in \m G$ we have
    \[
    \mathbb{P}[F \in \pi] = c{ms-c \choose m-1}^{-1}{2c \choose 2}^{-1}.
    \]
    Therefore,
    \[
    1\le \mathbb{E}\xi = |\overline{\mathcal{F}} \cap \m G| \cdot c{ms-c \choose m-1}^{-1}{2c \choose 2}^{-1}.
    \]
    Combining these inequalities, we get
    \[
    \begin{split}
        y_{\mathcal{F}}(m+1)
        &\geq |\overline{\mathcal{F}} \cap \m G| 
        \geq c^{-1}{ms-c \choose m-1}{2c \choose 2} \\
        &= (2c-1){n-2c \choose m-1} 
        = (2c-1){n \choose m - 1} + O(s^{m - 2}).
    \end{split}
    \]
\end{proof}

\begin{lemma} \label{l.stability_d_eq_mc-c}
    If $\mathcal{F} \subset 2^{[n]}$ is a shifted family with $d(\mathcal{F}) = (m-1)c$ and $\mathcal{F} \nsubseteq \mathcal{Q}(m,s,\ell)$, then
    \[
    y_{\mathcal{F}}(m+1) \geq {n \choose m - 1} + O(s^{m-2}).
    \]
\end{lemma}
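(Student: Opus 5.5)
The plan is to reuse the random-matching (averaging) argument from the proof of Lemma~\ref{l.d_geq_mc-c+1}. This time one of the $c$ ``extra'' slots is filled by a set that lies in $\mathcal F\setminus\mathcal Q(m,s,\ell)$, which exists by assumption. That saves one missing set per matching, but costs nothing on the $\ell$-matching side. Throughout put $T:=[ms-c,n]$, so $|T|=2c+1$ and $|[ms-c-1]|=m\ell+(m-1)c-1$.

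\textbf{Step 1 (a canonical violating set).} By the definition of $\mathcal Q(m,s,\ell)$, a set $P$ is missing from $\mathcal Q$ iff $|P\cap[ms-c-1]|\le 2m-1-|P|$. In particular every set of size $<m$ is missing, while all sets of size $\ge 2m$ lie in $\mathcal Q$. Since $\mathcal F$ is shifted and $\mathcal F\nsubseteq\mathcal Q$, I would pick a violating set $V\in\mathcal F\setminus\mathcal Q$ and shift it as far left as possible while keeping it outside $\mathcal Q$. This puts $V$ in the form $[a]\cup T_0$, where $T_0$ is an initial segment of $T$ and $a+|T_0|=|V|$, $a\le 2m-1-|V|$. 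Equivalently, $V$ uses $2m-|V|-1$ ``cheap'' elements and $2(|V|-m)+1$ elements of $T$. For the averaging we only need that every set with this intersection pattern lies in $\mathcal F$, which follows from shiftedness. Note that if $|V|<m$ one can add elements to $V$ only if $\mathcal F$ is an up-set. Since we may also assume $\mathcal F$ is maximal (hence an up-set), I would reduce to the cases $m\le|V|\le 2m-1$. The leading case is $|V|=m$ with exactly one element in $T$.

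\textbf{Step 2 (the random $s$-matching).} Apply Lemma~\ref{l.deleteon_with_shifting} with $d=(m-1)c-1<d(\mathcal F)$ and a suitable $k$. This gives, for every $X\subset[ms-c-1]$ of the appropriate size, an $\ell$-matching (or an $(\ell+k)$-matching) of $m$-sets of $\mathcal F$ inside $[ms-c-1]\setminus X$. The sizes are to be chosen so that $X\cup T$ can be partitioned into one copy $V'$ of the violating pattern from Step~1 together with $c-1$ sets of the form (an $(m-1)$-subset of $X$) $\cup$ (a pair from $T$). For $|V|=m$ this bookkeeping works exactly: $V'$ consumes $m-1$ elements of $X$ and one element of $T$, and the remaining $(m-1)(c-1)$ elements of $X$ together with $2c$ elements of $T$ form $c-1$ sets of size $m+1$. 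That uses $2c-1$ of the $T$-elements, and the two leftover $T$-elements can be absorbed using the slack $k$ in Lemma~\ref{l.deleteon_with_shifting}, or by enlarging $V'$. The procedure is: choose $X$ uniformly, take the fixed matching $\pi_X$, place $V'$, and then take a uniformly random $(c-1)$-matching of the $\mathcal G$-type sets on what remains. The result is an $s$-matching all of whose members except the $\mathcal G$-type ones lie in $\mathcal F$. Hence $\mathbb E\xi\ge 1$, where $\xi$ counts missing $\mathcal G$-type sets. Each fixed $\mathcal G$-type set appears with probability $(c-1)\binom{ms-c}{m-1}^{-1}\binom{2c-1}{2}^{-1}(1+o(1))$, which yields
\[
y_{\mathcal F}(m+1)\ \ge\ \frac{\binom{2c-1}{2}}{c-1}\binom{n}{m-1}+O(s^{m-2})=\frac{2c-1}{2}\binom{n}{m-1}+O(s^{m-2})\ge\binom{n}{m-1}+O(s^{m-2})
\]
for $c\ge 2$. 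The other sizes of $V$ are handled in the same way: $V$ uses more elements of $T$, so fewer pairs remain, but the ratio stays at least $1$.

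\textbf{Step 3 (the case $c=1$, and the main obstacle).} When $c=1$ there are no spare slots, so $\mathcal G$-type sets are unavailable and the estimate must come from elsewhere. Here I would use the hypothesis $d(\mathcal F)=(m-1)c=m-1$ more directly. Take the minimal deleting set $X_0$ of size $m-1$. Combining the violating set $V$ with the $\ell$-matchings avoiding $(m-2)$-element sets should force that, for each $(m-1)$-set $Y\subset[ms-2]$, at least one $(m+1)$-set of the form $Y\cup\{\text{two elements of }T\}$ is missing. Since these sets are distinct for distinct $Y$, this gives $\binom{n}{m-1}(1+o(1))$ missing $(m+1)$-sets. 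I expect the main obstacle to be exactly this bookkeeping: making the sizes in Lemma~\ref{l.deleteon_with_shifting} match the leftover elements of $T$ for every possible shape of $V$ (and for $c=1$), and checking that the shifted copy of $V$ really lies in $\mathcal F$ and is disjoint from the random part.
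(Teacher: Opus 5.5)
\paragraph{Where your Step~2 breaks.} The element count does not close, and it fails exactly in the case you call leading ($|V|=m$, one element in $T=[ms-c,n]$). Below, ``type $a+b$'' means $a$ elements in $[ms-c-1]$ and $b$ in $T$.

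\begin{itemize}
\item The set $[ms-c-1]$ has $m\ell+(m-1)c-1$ elements, so after an $\ell$-matching only $(m-1)c-1$ of them remain. But $V'$ together with $c-1$ sets of type $(m-1)+2$ needs $(m-1)+(m-1)(c-1)=(m-1)c$ of them.
\item Neither the slack $k$ nor enlarging $V'$ can repair this. Put weight $\frac1m$ on each element of $[ms-c-1]$ and $\frac1{2m}$ on each element of $T$. The total is $s-\frac1{2m}$, $V'$ weighs $1-\frac1{2m}$, and every member of $\mathcal Q(m,s,\ell)$ weighs at least $1$.
\item Hence in any $s$-matching containing $V'$ whose other members lie in $\mathcal Q(m,s,\ell)$, those members weigh exactly $1$ and cover every element. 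If they are $m$-subsets of $[ms-c-1]$ or of type $(m-1)+2$, this forces exactly $c$ sets of type $(m-1)+2$ and only $\ell-1$ sets of size $m$.
\item So you would need an $(\ell-1)$-matching in $[ms-c-1]$ after deleting $(m-1)(c+1)$ elements. Lemma~\ref{l.deleteon_with_shifting} with $k\ge 0$ does not give this.
\item If you keep $\ell$ sets of size $m$, some remaining set must be of another type, e.g.\ $(m-2)+3$ or $(m-2)+4$. There are only $O(s^{m-2})$ such sets. If they are missing from $\mathcal F$, they can absorb all of $\mathbb E\xi\ge 1$, and the averaging then says nothing about $y_{\mathcal F}(m+1)$.
\end{itemize}

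For $c=1$ this configuration is the entire problem, and your Step~3 only says the sets ``should'' be forced, with no argument. (Separately, $\binom{2c-1}{2}/(c-1)=2c-1$, not $\frac{2c-1}{2}$.)

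\paragraph{How the paper proceeds, and a caveat.} The paper keeps an arbitrary $F\in\mathcal F\setminus\mathcal Q(m,s,\ell)$ and records $x=|F\cap[ms-c-1]|$ and $y=|F\setminus[ms-c-1]|$, so that $2x+y\le 2m-1$. It uses no maximality reduction; yours need not preserve $d(\mathcal F)=(m-1)c$.

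\begin{itemize}
\item If $x\le m-c-1$, Lemma~\ref{l.deleteon_with_shifting} with $d=mc-c-1$ and $k=c-1$ gives an $(s-1)$-matching avoiding $F$, a contradiction. You never dispose of this case.
\item If $x\ge m-c$, the paper uses exactly $x+c+1-m$ sets of type $(m-1)+2$ and an $(\ell+m-x-2)$-matching, from the lemma with $k=m-x-2$. Then all counts fit and $y_{\mathcal F}(m+1)\ge(2x+2c+1-2m)\binom{n}{m-1}+O(s^{m-2})$.
\end{itemize}

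Be aware, however, that at $x=m-1$ (your leading case, and the only case when $c=1$) this means $k=-1$, which is outside the range of Lemma~\ref{l.deleteon_with_shifting}. The lemma genuinely fails there. Take $m=2$, $c=1$ and $\mathcal F^{(2)}=\{\{a,b\}:a+b\le 2s\}$. Then $d(\mathcal F)=1$, yet $[3,2s-2]$ carries no $(s-2)$-matching, since its element sum exceeds $2s(s-2)$. So the obstacle you flagged at the end is real.

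One fix, sufficient for Theorem~\ref{t.layer_stability}, is the following. Suppose some $Y\subset[ms-c-1]$ with $|Y|=(m-1)(c+1)$ leaves no $(\ell-1)$-matching in $[ms-c-1]\setminus Y$. By shiftedness there is then none in $[n]\setminus Y$ either. Theorem~\ref{t.frankl_EMC}, applied on this $(m(\ell-1)+2c+1)$-element set, then gives $y_{\mathcal F}(m)\ge(2c+2)\binom{n}{m-1}+O(s^{m-2})$ directly. If no such $Y$ exists, the construction goes through.
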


\begin{proof}
    The proof is very similar to that of Lemma~\ref{l.d_geq_mc-c+1}, although the construction of a random $s$-matching is slightly more complicated. Let $F$ be an arbitrary set in $\mathcal{F} \setminus \mathcal{Q}(m, s, \ell)$. Denote
    \[
    x = |F \cap [ms-c-1]| \qquad \text{and} \qquad y = |F \setminus [ms-c-1]|.
    \]
    By the definition of $\mathcal{Q}(m, s, \ell)$, we have \begin{equation}\label{eqxy}2x + y \leq 2m - 1.\end{equation}

    Assume that $x \leq m-c-1$. Since $x \geq 0$, we have $m \geq c + 1$, and therefore
    \[
    d(\mathcal{F}) = (m-1)c > mc-c - 1 \geq m(c-1).
    \]
    Thus, we may apply Lemma~\ref{l.deleteon_with_shifting} with $d = mc-c-1$ and $k=c-1$, and conclude that for any
    $
    X \in {[ms-c-1] \choose m-c-1}
    $
    the family $\mathcal{F}\cap {[ms-c-1]\setminus X \choose m}$ contains an $(s-1)$-matching. Since
    \[
    x = |F \cap [ms-c-1]| \leq m - c - 1,
    \]
    it follows that $\mathcal{F}\cap {[ms-c-1]\setminus F \choose m}$ contains an $(s-1)$-matching. Adding $F$ to this matching, we obtain an $s$-matching in $\mathcal{F}$, a contradiction with $\nu(\m F)<s$.

    Thus, we may assume that $x \geq m - c$. Construct a random $s$-matching $\pi$ in
    \[
    \Bigl(\mathcal{F} \cap {[ms-c-1] \choose m}\Bigr) \cup \Bigl({[ms-c-1] \choose m-1} \times {[ms-c, n] \choose 2}\Bigr) \cup \{F\}
    \]
    by the following procedure.
    \begin{itemize}
        \item Add $F$ to $\pi$.
        \item Choose
        $
        X \in {[ms-c-1] \setminus F \choose (m-1)(x+c+1-m)}
        $
        uniformly at random. Note that $x+c+1-m > 0$, since $x \geq m - c$.
        \item By Lemma~\ref{l.deleteon_with_shifting}, with $d = mc-c-1$ and $k = m-x-2$, for any
        $
        Y \in {[ms-c-1] \choose d-km}
        $
        the family $\mathcal{F} \cap {[ms-c-1] \setminus Y \choose m}$ contains an $(\ell+m-x-2)$-matching. We have $d-km=(m-1)(x+c+1-m) + x$. We apply the lemma to
        $
        Y = X \cup (F \cap [ms-c-1])
        $
        and obtain an $(\ell+m-x-2)$-matching in
        \[
        \mathcal{F} \cap {[ms-c-1] \setminus (X \cup F) \choose m}.
        \]
        Add this matching to $\pi$.
        \item Choose a random $(x+c+1-m)$-matching in
        \[
        {X \choose m-1} \times {[ms-c, n] \setminus F \choose 2}
        \]
        and add it to $\pi$. Note that such matchings exist, since $|X|=(m-1)(x+c+1-m)$ and
        \[
        |[ms-c, n] \setminus F| = 2c+1-y \overset{\eqref{eqxy}}{\geq} 2(x+c+1-m).
        \]
    \end{itemize}

    Note that $\pi$ contains
    \[
    1+(\ell+m-x-2)+(x+c+1-m) = s
    \]
    sets, and all sets except those added in the last step belong to $\mathcal{F}$. Let $\xi$ be the number of sets from $\overline{\mathcal{F}}$ in $\pi$. On the one hand, $\xi \geq 1$, since $\pi$ is an $s$-matching and $\mathcal{F}$ does not contain an $s$-matching. Therefore, $\mathbb{E}\xi \geq 1$. On the other hand, for any set
    \[
    H \in \m G:={[ms-c-1] \setminus F \choose m-1} \times {[ms-c, n] \setminus F \choose 2}
    \]
    we have
    \[
    \mathbb{P}[H \in \pi] = (x+c+1-m){ms-c-1-x \choose m-1}^{-1}{2c+1-y \choose 2}^{-1}.
    \]
    Therefore,
    \[
    1\le \mathbb{E}\xi = |\overline{\mathcal{F}} \cap \m G|
    \cdot (x+c+1-m){ms-c-1-x \choose m-1}^{-1}{2c+1-y \choose 2}^{-1}.
    \]
    Combining these inequalities, we get
    \[
    \begin{split}
        y_{\mathcal{F}}(m+1)
        &\geq |\overline{\mathcal{F}} \cap \m G| \\
        &\geq (x+c+1-m)^{-1}{ms-c-1-x \choose m-1}{2c+1-y \choose 2} \\
        &\geq (x+c+1-m)^{-1}{ms-c-1-x \choose m-1}{2(x+c+1-m) \choose 2} \\
        &= (2x+2c+1-2m){n-2c-1-x \choose m-1} \\
        &\geq {n-2c-1-x \choose m-1} \\
        &= {n \choose m - 1} + O(s^{m - 2}).
    \end{split}
    \]
\end{proof}

\begin{proof}[Proof of Theorem~\ref{t.layer_stability}]
If $d(\mathcal{F}) > mc$, then, by Lemma~\ref{l.deleteon_with_shifting} with $d=mc$ and $k=c$, the family $\mathcal{F} \cap {[ms] \choose m}$ contains an $s$-matching, and therefore $\mathcal{F}$ contains an $s$-matching, a contradiction. Thus, $d(\m F)\le mc.$ We consider three cases:
\[
d(\mathcal{F}) < (m-1)c,\qquad d(\mathcal{F}) = (m-1)c,\qquad \text{and} \qquad (m-1)c < d(\mathcal{F}) \leq mc.
\]

If $d(\mathcal{F}) < (m-1)c$, then by Lemma~\ref{l.d_to_ym} we get
\[
y_{\mathcal{F}}(m) \geq ((m+1)c-((m-1)c-1)+1){n \choose m - 1} + O(s^{m - 2}) = (2c+2){n \choose m - 1} + O(s^{m - 2}).
\]

If $d(\mathcal{F}) = (m-1)c$ and $\mathcal{F} \nsubseteq \mathcal{Q}(m,s,\ell)$, then by Lemma~\ref{l.d_to_ym} we get
\[
y_{\mathcal{F}}(m) \geq (2c+1){n \choose m - 1} + O(s^{m - 2}),
\]
and by Lemma~\ref{l.stability_d_eq_mc-c} we get
\[
y_{\mathcal{F}}(m+1) \geq {n \choose m - 1} + O(s^{m-2}).
\]
Therefore,
\[
y_{\mathcal{F}}(m) + y_{\mathcal{F}}(m + 1) \geq (2c+2){n \choose m - 1} + O(s^{m - 2}).
\]

The case $(m-1)c < d(\mathcal{F}) \leq mc$ requires a slightly longer analysis. Since $d(\mathcal{W}(m, s, \ell)) = mc$, it is natural that well have to consider  the case $c = 1$ and $d(\mathcal{F}) = mc$ separately. 
%First, we check that Lemmas~\ref{l.d_to_ym} and~\ref{l.d_geq_mc-c+1} imply
%\[
%y_{\mathcal{F}}(m) + y_{\mathcal{F}}(m + 1) \geq (2c+2){n \choose m - 1} + O(s^{m - 2}),
%\]
If $d(\mathcal{F}) = mc-\alpha$, $\alpha\ge 0$, then by Lemma~\ref{l.d_to_ym} we get
{\small \begin{align} \label{eq:tmp1}
    y_{\mathcal{F}}(m) \geq ((m+1)c-d(\m F)+1){n \choose m - 1} + O(s^{m - 2}) = (c+\alpha+1){n \choose m - 1} + O(s^{m - 2}),
\end{align}}
and by Lemma~\ref{l.d_geq_mc-c+1} we get
\[
y_{\mathcal{F}}(m+1) \geq (2c-1){n \choose m - 1} + O(s^{m-2}).
\]
Therefore,
\[
    y_{\mathcal{F}}(m) + y_{\mathcal{F}}(m + 1) \geq (3c+\alpha){n \choose m - 1}\ge (2c+2){n \choose m - 1} + O(s^{m - 2}),
\]
where the second inequality is valid unless $c = 1$ and $d(\mathcal{F}) = mc$.

Finally, we prove that if $d(\mathcal{F}) = mc$ and $c=1$, then $\mathcal{F} \subset \mathcal{W}(m, s, s-1)$. Assume the contrary, that is, that there exists $F \in \mathcal{F}$ such that $|F \cap [ms - 1]| \leq m - 1$. Let $X$ be an arbitrary $(m-1)$-element subset of $[ms - 1]$ containing $F \cap [ms - 1]$. By Lemma~\ref{l.deleteon_with_shifting} with $d = mc - 1$ and $k = c - 1$, the family $\mathcal{F}\cap {[ms - 1] \setminus X \choose m}$ contains an $(s-1)$-matching. Adding $F$ to this matching, we obtain an $s$-matching in $\mathcal{F}$, contradicting $\nu(\mathcal{F}) < s$.
\end{proof}

\section{Proof of Theorem~\ref{t.main}} \label{s.no_m-1}

In this section we prove the following stability version of Theorem~\ref{t.main} for shifted families.

\begin{theorem} \label{t.main_stability}
    Let $n = ms + c$, $m \geq 2$. Let $\mathcal{F} \subset 2^{[n]}$ be a shifted family and $\nu(\mathcal{F}) < s$. We have   \[
    |\overline{\mathcal{F}}| \geq (2c+3){n \choose m - 1} + O(s^{m - 2}),
    \] 
    provided one of the following holds.
    \begin{itemize}
        \item $c \geq 2$ and $\mathcal{F} \nsubseteq \mathcal{Q}(m, s, \ell)$;
        \item $c = 1$, $\mathcal{F} \nsubseteq \mathcal{Q}(m, s, \ell)$, and $\mathcal{F} \nsubseteq \mathcal{W}(m, s, \ell)$.
    \end{itemize}
\end{theorem}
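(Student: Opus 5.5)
The plan is to split according to whether $\mathcal F$ contains a set of size $m-1$. In both cases we exhibit $(2c+3){n \choose m-1}+O(s^{m-2})$ sets missing from $\mathcal F$, and in both cases we draw them from Theorem~\ref{t.layer_stability} or from its proof. Note that $\mathcal F$ contains no set of size $\le m-2$: by shiftedness such a set would imply $[m-2]\in\mathcal F$, and then all sets of size $\ge m+1$ in $[m-1,n]$ avoid $[m-2]$, so one could complete an $s$-matching. In any case those small sets only contribute $O(s^{m-2})$, so we do not need them.

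\emph{Case 1: $\mathcal F^{(m-1)}=\emptyset$.} Then $\overline{\mathcal F}$ contains all of ${[n]\choose m-1}$, which gives ${n\choose m-1}$ missing sets. The hypotheses of the present theorem are exactly those of Theorem~\ref{t.layer_stability}, so $y_{\mathcal F}(m)+y_{\mathcal F}(m+1)\ge (2c+2){n\choose m-1}+O(s^{m-2})$. These sets lie on different layers, so adding the two bounds gives $(2c+3){n\choose m-1}+O(s^{m-2})$.

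\emph{Case 2: $\mathcal F$ contains an $(m-1)$-set.} By shiftedness $X:=[m-1]\in\mathcal F$. Let $\mathcal F'=\{G\subset [m,n]: G\in\mathcal F\}$. Then $\nu(\mathcal F')<s-1$, since otherwise adding $X$ gives an $s$-matching. After relabeling $[m,n]$ as $[n']$, the family $\mathcal F'$ is shifted, and
\[
n'=n-m+1=m(s-1)+(c+1).
\]
So $\mathcal F'$ is a shifted family with parameters $s'=s-1$, $c'=c+1$ (and $1\le c'\le s'$ for large $s$), and it has no $s'$-matching. Every $m$- or $(m+1)$-subset of $[m,n]$ not in $\mathcal F'$ is missing from $\mathcal F$. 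Hence it suffices to show the unconditional bound
\[
y_{\mathcal F'}(m)+y_{\mathcal F'}(m+1)\ge (2c'+1){n'\choose m-1}+O(s^{m-2}).
\]
Since $2c'+1=2c+3$ and ${n'\choose m-1}={n\choose m-1}+O(s^{m-2})$, this would finish the proof.

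The main step, and the only place needing care, is extracting this unconditional bound $y(m)+y(m+1)\ge(2c+1){n\choose m-1}+O(s^{m-2})$ for every shifted family without an $s$-matching (it is tight for $\mathcal Q$) from the proof of Theorem~\ref{t.layer_stability}, applied with parameters $(s',c')$. There we saw $d(\mathcal F')\le mc'$, and the proof splits into three ranges.
\begin{itemize}
\item If $d(\mathcal F')<(m-1)c'$, Lemma~\ref{l.d_to_ym} gives $(2c'+2){n'\choose m-1}$.
\item If $d(\mathcal F')=(m-1)c'$, Lemma~\ref{l.d_to_ym} alone gives $(2c'+1){n'\choose m-1}$.
\item If $(m-1)c'<d(\mathcal F')\le mc'$, Lemmas~\ref{l.d_to_ym} and~\ref{l.d_geq_mc-c+1} give $(3c'+\alpha){n'\choose m-1}\ge(2c'+1){n'\choose m-1}$, because $c'\ge1$.
\end{itemize}
None of these steps uses the non-containment hypotheses. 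The conditions needed for Lemma~\ref{l.d_to_ym}, namely $s'$ large relative to $c'$ and $m$ through Theorem~\ref{t.frankl_EMC}, hold for $s\ge s_0(m,c)$. I will verify this carefully and then conclude.
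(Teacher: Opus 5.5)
Your proof is correct. Case 1 is identical to the paper's, but Case 2 uses a different ingredient.

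The paper removes an arbitrary $(m-1)$-set $F\in\mathcal F$ and applies Lemma~\ref{l.approximate_main} to $\mathcal F\cap 2^{[n]\setminus F}$. This bounds its \emph{entire} complement by $(2c'+2){n'\choose m-1}=(2c+4){n'\choose m-1}$. That lemma rests on the full stability Theorem~\ref{t.layer_stability} plus the explicit sizes of $\overline{\mathcal Q}$ and $\overline{\mathcal W}$. It does not need $\mathcal F'$ to be shifted, since it shifts internally.

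You instead count only layers $m$ and $m+1$ of $\mathcal F'$. You use the non-stability bound $y(m)+y(m+1)\ge(2c+1){n\choose m-1}+O(s^{m-2})$, valid for every shifted family without an $s$-matching. This bound does follow from $d(\mathcal F)\le mc$, Lemma~\ref{l.d_to_ym} and Lemma~\ref{l.d_geq_mc-c+1} alone. So you never need Lemma~\ref{l.stability_d_eq_mc-c}, the special treatment of $c=1$, $d=mc$, or the computation of $|\overline{\mathcal Q}|$ and $|\overline{\mathcal W}|$.

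The price is twofold. First, you must keep $\mathcal F'$ shifted, which you do correctly by taking $X=[m-1]$ and an order-preserving relabelling. Second, you land exactly on $2c+3$ with no slack, the binding case being $d(\mathcal F')=(m-1)c'$. The paper's route has a spare ${n\choose m-1}$.

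One correction: your opening remark that $\mathcal F$ contains no set of size at most $m-2$ is false in general. For example, $\mathcal F=\{[m-2]\}$ satisfies all the hypotheses. Your argument for it silently assumes $\mathcal F$ contains every set of size at least $m+1$. Since you never use the remark, just delete it.
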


It is easy to see that this implies  Theorem~\ref{t.main}. Indeed, we have
\[
|\overline{\mathcal{Q}(m, s, \ell)}| = (2c+2){n \choose m - 1} + O(s^{m-2}),
\]
and $|\mathcal{Q}(m, s, \ell)| = |\mathcal{W}(m, s, \ell)|$ for $c=1$. Thus, Theorem~\ref{t.main_stability} together with the fact that the value of $e(n, s)$ is attained by a shifted family proves Theorem~\ref{t.main}.

%First, we note that Theorem \ref{t.main_stability} implies Theorem \ref{t.main}.

%\textit{Proof of Theorem \ref{t.main}, using Theorem \ref{t.main_stability}}.
%It is well-known \cite{Frankl_Shifting} that $e(n, s)$ is attained in some shifted family $\mathcal{F}$ and either $\mathcal{F} \subset \mathcal{Q}(m, s, \ell)$ (and, therefore, $e(n, s) = |\mathcal{F}| \leq |\mathcal{Q}(m, s, \ell)|$), or $$|\overline{\mathcal{F}}| \geq (2c+3){n \choose m - 1} + O(s^{m - 2}) > (2c+2){n \choose m - 1} + O(s^{m - 2}) = |\overline{\mathcal{Q}(m, s, \ell)}|$$ for $s \geq s_0(m,c)$. \qed

We prove Theorem~\ref{t.main_stability} in two steps. First, we derive from Theorem~\ref{t.layer_stability} an approximate version of Theorem~\ref{t.main}. Then we use this approximate version to prove Theorem~\ref{t.main_stability} for families that contain at least one $(m-1)$-element set.

\begin{lemma} \label{l.approximate_main}
    Let $n = ms + c$, $m \geq 2$. Let $\mathcal{F} \subset 2^{[n]}$ be a family with $\nu(\mathcal{F}) < s$. Then
    \[
    |\overline{\mathcal{F}}| \geq (2c+2){n \choose m - 1} + O(s^{m - 2}).
    \]
\end{lemma}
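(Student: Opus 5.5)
First, by shifting \cite{Frankl_Shifting} we may assume $\mathcal F$ is shifted. Shifting keeps $|\mathcal F|$ and does not increase $\nu(\mathcal F)$, so it suffices to bound $|\overline{\mathcal F}|$ for shifted $\mathcal F$ with $\nu(\mathcal F)<s$. We will also need upward closedness. Replacing $\mathcal F$ by its up-closure $\{G: G\supseteq F\in\mathcal F\}$ does not create new $s$-matchings: from an $s$-matching of supersets we pick one member of $\mathcal F$ inside each set, and these are still pairwise disjoint. The up-closure of a shifted family is again shifted. Since it only enlarges $\mathcal F$, proving the bound for it proves it for $\mathcal F$. So assume $\mathcal F$ is shifted and upward closed.

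Next we split according to whether $\mathcal F$ contains a set of size at most $m-1$.

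\emph{Case 1: $\mathcal F$ has no set of size $\le m-1$.} Then all $\sum_{i\le m-1}\binom{n}{i}=\binom{n}{m-1}+O(s^{m-2})$ small sets lie in $\overline{\mathcal F}$. We then bound $y_{\mathcal F}(m)+y_{\mathcal F}(m+1)\ge (2c+1)\binom{n}{m-1}+O(s^{m-2})$. If $\mathcal F\nsubseteq\mathcal Q(m,s,\ell)$ (and, when $c=1$, also $\mathcal F\nsubseteq\mathcal W$), Theorem~\ref{t.layer_stability} gives even $(2c+2)\binom{n}{m-1}$. If $\mathcal F\subseteq\mathcal Q(m,s,\ell)$, then $y_{\mathcal F}(i)\ge y_{\mathcal Q}(i)$ for every $i$, and $|\overline{\mathcal Q}|=(2c+2)\binom{n}{m-1}+O(s^{m-2})$ directly. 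If $c=1$ and $\mathcal F\subseteq\mathcal W$, we use Lemma~\ref{l.c=1 equality}. Either way the bound $(2c+2)\binom{n}{m-1}+O(s^{m-2})$ follows.

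\emph{Case 2: $\mathcal F$ contains a set of size $\le m-1$.} By upward closedness and shiftedness, $\mathcal F$ contains the $(m-1)$-set $X=[m-1]$. Put $\mathcal F'=\mathcal F\cap 2^{[n]\setminus X}$, a family on ground set $[n]\setminus X$ of size $n'=n-m+1=m(s-1)+(c+1)$. Since $X\in\mathcal F$, we have $\nu(\mathcal F')<s-1$. We plan a crude induction on the deficiency: compare $\mathcal F'$ with the $(s-1)$-matching problem with parameters $m$ and $c'=c+1$, where $c'\le s-1$ for large $s$. The map $G\mapsto G\cup Y$ for $Y\subseteq X$ shows
\[
|\overline{\mathcal F}| \;\ge\; \sum_{Y\subseteq X} \bigl|\{G\subseteq [n]\setminus X: G\cup Y\notin\mathcal F\}\bigr| \;\ge\; |\overline{\mathcal F'}|,
\]
and $\mathcal F'$ is again shifted with $\nu<s-1$. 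Iterating a lower bound of the form $|\overline{\mathcal F'}|\ge \beta(c')\binom{n'}{m-1}$ would require a statement for all $c$ simultaneously. To avoid circularity we instead get a weaker bound in one go. Every set $G\subseteq[n]\setminus X$ with $|G|=m-1$ plays the role of $X$. Because $\nu(\mathcal F')<s-1$ and $\mathcal F'$ lives on $n'=m(s-1)+c+1$ points, the argument of Lemma~\ref{l.d_to_ym} (deletion number plus Theorem~\ref{t.frankl_EMC}) shows that either many $m$-sets avoiding $X$ are missing, or removing few points kills all $(s-1-\text{const})$-matchings.

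The main obstacle is that Lemma~\ref{l.approximate_main} is stated without the shiftedness and layer assumptions, so Case~2 needs a self-contained bound. My first attempt would be a direct double count: fix an $s$-matching structure consisting of $X$ together with perfect-ish matchings, and average as in Lemma~\ref{l.d_geq_mc-c+1}. For a uniformly random partition of $[n]\setminus X$ into $s-1$ parts of sizes $m$ or $m+1$ (there are $c+1$ parts of size $m+1$), at least one part must be missing from $\mathcal F$. Hence the expected number of missing parts is at least $1$. Averaging gives $y(m)/\binom{n'}{m}\cdot(s-c-2)+y(m+1)/\binom{n'}{m+1}\cdot(c+1)\ge 1$. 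This already forces $y(m)+y(m+1)\gtrsim \binom{n}{m-1}\cdot\Theta(1)$ with a constant that we then need to push to $2c+2$. If that constant is too weak, I would fall back on iterating: $X_1=[m-1]$, then $X_2=[m,2m-2]$, and so on. Each step lowers $s$ by one and raises $c$ by one, with $y$-loss at layer $m$ of order $(m+1)c'\binom{n}{m-1}$ via Lemma~\ref{l.d_to_ym}, which exceeds $(2c+2)$ once $c'$ is moderately large. We stop before $c'$ reaches the range where Theorem~\ref{t.frankl_EMC} fails, which is fine since $s\ge s_0(m,c)$.
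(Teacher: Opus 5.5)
Your Case~1 argument is correct, and it is in fact the paper's whole proof. After shifting, either $\mathcal F\subseteq\mathcal Q(m,s,\ell)$ (or, when $c=1$, $\mathcal F\subseteq\mathcal W(m,s,\ell)$), which gives $|\overline{\mathcal F}|\ge|\overline{\mathcal Q(m,s,\ell)}|=(2c+2){n\choose m-1}+O(s^{m-2})$ directly. Otherwise Theorem~\ref{t.layer_stability} gives $|\overline{\mathcal F}|\ge y_{\mathcal F}(m)+y_{\mathcal F}(m+1)\ge(2c+2){n\choose m-1}+O(s^{m-2})$.

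The gap is Case~2, which you never actually prove. The random-partition average yields only $y(m)+y(m+1)\gtrsim{n\choose m-1}$: since ${n'\choose m}/(s-c-2)\approx{n\choose m-1}$, the constant is $1$, not $2c+2$. The fallback iteration misreads Lemma~\ref{l.d_to_ym}. That lemma gives $((m+1)c'-d(\mathcal F')+1){n\choose m-1}$, and $d(\mathcal F')$ may be as large as $mc'$. So the guaranteed loss is only $(c'+1){n\choose m-1}$, not something of order $(m+1)c'{n\choose m-1}$. The iteration also presupposes further disjoint $(m-1)$-sets in $\mathcal F$, which you do not have. It could be patched: iterate $c+1$ times so that $c'+1=2c+2$, and fall back to Case~1 whenever no $(m-1)$-set remains. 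But none of this is needed.

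The missing observation is that your Case~1 reasoning never used the Case~1 hypothesis; the extra ${n\choose m-1}$ from small sets is superfluous. The reason is that Theorem~\ref{t.layer_stability} has no hypothesis at all on layers below $m$: it applies to every shifted $\mathcal F$ with $\nu(\mathcal F)<s$. In Case~2 it applies even more directly:
\begin{itemize}
\item every $P\in\mathcal Q(m,s,\ell)$ satisfies $2|P|\ge|P|+|P\cap[ms-c-1]|\ge 2m$;
\item every $P\in\mathcal W(m,s,\ell)$ satisfies $|P\cap[ms-1]|\ge m$.
\end{itemize}
So both families consist of sets of size at least $m$. Hence an $(m-1)$-set in $\mathcal F$ forces $\mathcal F\nsubseteq\mathcal Q(m,s,\ell)$ and $\mathcal F\nsubseteq\mathcal W(m,s,\ell)$, and Theorem~\ref{t.layer_stability} finishes.

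Consequently the up-closure step and the case split can both be dropped. The ``main obstacle'' you describe is illusory, since you already obtained shiftedness without loss of generality in your first paragraph. The reduction to $\mathcal F\cap 2^{[n]\setminus X}$ that you began is what the paper uses afterwards, in the proof of Theorem~\ref{t.main_stability}, to gain an extra ${n\choose m-1}$. There it is not circular, because the present lemma, applied with $c+1$ and $s-1$, is already established.
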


\begin{proof}
    We may w.l.o.g. assume that $\m F$ is shifted. If $\mathcal{F} \subset \mathcal{Q}(m, s, \ell)$ or $\mathcal{F} \subset \mathcal{W}(m, s, \ell)$, then
    \[
    |\overline{\mathcal{F}}| \geq |\overline{\mathcal{Q}(m, s, \ell)}| = (2c+2){n \choose m - 1} + O(s^{m - 2}).
    \]
    Otherwise, by Theorem~\ref{t.layer_stability},
    \[
    |\overline{\mathcal{F}}| \geq y_{\mathcal{F}}(m) + y_{\mathcal{F}}(m + 1) \geq (2c+2){n \choose m - 1} + O(s^{m - 2}).
    \]
\end{proof}

\begin{proof}[Proof of Theorem~\ref{t.main_stability}]
    If $\mathcal{F}$ does not contain any $(m-1)$-element set, then $y_{\mathcal{F}}(m-1) = {n \choose m - 1}$. By Theorem~\ref{t.layer_stability} we have
    \[
    y_{\mathcal{F}}(m) + y_{\mathcal{F}}(m + 1) \geq (2c+2){n \choose m - 1} + O(s^{m - 2}).
    \]
    Therefore,
    \[
    |\overline{\mathcal{F}}| \geq (2c+3){n \choose m - 1} + O(s^{m - 2}).
    \]

    If $\mathcal{F}$ contains an $(m-1)$-element set, let $F$ be an arbitrary $(m-1)$-element set in $\mathcal{F}$ and let
    \[
    \mathcal{F}' = \mathcal{F} \cap 2^{[n]\setminus F}.
    \]
    Since $\mathcal{F}'$ is a  family of subsets of an $((s-1)m+(c+1))$-element set that does not contain an $(s-1)$-matching, we may apply Lemma~\ref{l.approximate_main} to $\mathcal{F}'$ and get
    \[
    |\overline{\mathcal{F}}| \geq |\overline{\mathcal{F}'}| \geq (2c+4){n - m + 1 \choose m - 1} + O((s-1)^{m - 2}) = (2c+4){n \choose m - 1} + O(n^{m - 2}).
    \]
\end{proof}

\end{document}